\def\r{\mathbb{R}}
\theoremstyle{plain}
 \newtheorem{theorem}{Theorem}[section]
 \newtheorem{definition}[theorem]{Definition}
 \newtheorem{remark}[theorem]{Remark}
 \newtheorem{lemma}[theorem]{Lemma}
 \newtheorem{corollary}[theorem]{Corollary}
 \newtheorem{claim}{Claim}
\providecommand{\keywords}[1]{
    \small\textbf{\textit{Keywords---}} #1
}
\begin{document}
\title{Elliptic special Weingarten surfaces of minimal type in $\r ^3$ of finite total curvature}
\author{José M. Espinar$^{\dag}$ and Héber Mesa$^{\ddag}$}
\date{}
\maketitle

\begin{center}
{\footnotesize $^{\dag}$Departamento de Geometría y Topología, Universidad de Granada,\\ Granada, Spain\\Email: jespinar@ugr.es\\https://orcid.org/0000-0003-1323-6648\\}
\vspace{.3cm}
{\footnotesize $^{\ddag}$Departamento de Matem\'aticas, Universidad del Valle,\\ Santiago de Cali, Colombia\\ 
Email: heber.mesa@correounivalle.edu.co\\https://orcid.org/0000-0002-9952-575X\\}
\end{center}

\begin{abstract}
We study complete connected embedded elliptic special Weingarten surfaces of minimal type (\emph{$f$-surfaces}, for short) in $\mathbb{R}^3$ with finite total curvature, under the standing assumption that $f$ is a non-negative and uniformly elliptic function. First, using the recent asymptotic theory of Barbieri, Gálvez, Lian, and Zhang for embedded ends, we derive a Jorge--Meeks type formula for this class of surfaces. Next, we adapt the Alexandrov reflection method to the non-cylindrically bounded setting and prove a Schoen-type theorem: a complete connected embedded $f$-surface of finite total curvature with two embedded ends must be rotationally symmetric. In particular, it is one of the special catenoids constructed by Sa Earp and Toubiana. This gives a positive answer to a question raised by Sa Earp in 1993. As a consequence, we show that planes and special catenoids are the only complete connected embedded $f$-surfaces of finite total curvature whose absolute total curvature is less than $8\pi$.
\end{abstract}

\keywords{minimal surface, elliptic Weingarten surface, finite total curvature, geometric maximum principle.}

\section{Introduction}
\addcontentsline{toc}{section}{Introduction}

Complete minimal surfaces in $\r^3$ with finite total curvature form one of the classical themes of global differential geometry. By the work of Huber and Osserman, such surfaces are conformally equivalent to compact Riemann surfaces with finitely many punctures, and their Gauss map extends meromorphically across the ends; see \cite{Huber1,Osserman1}. In the seminal paper of Jorge and Meeks \cite{Jorge-Meeks}, the total curvature was related to the topology of the surface through their celebrated formula, and the structure of embedded ends was analyzed. These ideas were later used by Schoen \cite{Schoen1} to prove that the catenoid is the only complete connected immersed minimal surface in $\r^3$ with finite total curvature and two embedded ends. As a consequence, the plane and the catenoid are the only complete embedded minimal surfaces of finite total curvature whose absolute total curvature is less than $8\pi$.

In this paper we consider a broader class of surfaces in $\r^3$ that includes minimal surfaces, namely elliptic special Weingarten surfaces, abbreviated as ESW-surfaces. These surfaces satisfy a relation of the form
\[
H=f(H^2-K),
\]
where $H$ and $K$ denote the mean and Gaussian curvatures, respectively, and $f\in C^0([0,+\infty))\cap C^1((0,+\infty))$ satisfies the ellipticity condition
\[
4t f'(t)^2<1 \qquad \text{for all } t> 0.
\]
We will work under slightly stronger standing assumptions, namely that $f$ is non-negative and that the ellipticity condition is uniform; see \eqref{elliptic-conditions} below.
From the geometric point of view, the theory of elliptic Weingarten surfaces may be regarded as a fully nonlinear counterpart of the theory of constant mean curvature surfaces.

Closed Weingarten surfaces in $\r^3$ were extensively studied by Hartman and Wintner \cite{Hartman}, Chern \cite{Chern3,Chern2}, Hopf \cite{HHopf}, and Bryant \cite{Bryant}, who obtained, among other results, generalizations of Hopf's theorem and Liebmann's theorem. In the non-compact setting, however, additional assumptions on the function $f$ near $0$ and/or at infinity are needed in order to rule out pathological behavior.

A decisive step in the global theory was made by Rosenberg and Sa Earp \cite{Rosenberg-SaEarp1}, who distinguished between two classes of ESW-surfaces: those of constant mean curvature type, characterized by the condition $f(0)\neq 0$, and those of minimal type, characterized by $f(0)=0$. This distinction is geometrically meaningful. In the constant mean curvature type case, the sphere of radius $1/|f(0)|$ belongs to the family, and Rosenberg and Sa Earp were able to extend the Korevaar--Kusner--Meeks--Solomon theory \cite{KKS,Meeks} for properly embedded constant mean curvature surfaces in $\r^3$ to this Weingarten setting, under additional assumptions on $f$. In particular, when $H$ and $K$ satisfy a linear relation
\[
aH+bK=c,
\]
the corresponding surfaces are called linear Weingarten surfaces, and Rosenberg and Sa Earp \cite{Rosenberg-SaEarp1} proved that, if $b=1$ and $a,c>0$, the annular ends of a properly embedded linear Weingarten surface converge to Delaunay ends. Additional results in this direction can be found in \cite{Braga-SaEarp1,GMM,SaEarp4,SaEarp3}.

The minimal type case is the one relevant for the present paper. In a series of papers \cite{SaEarp2,SaEarp1}, Sa Earp and Toubiana constructed rotationally symmetric examples, in particular the special catenoids, which play for this theory a role analogous to that of the classical catenoid in the minimal setting. These examples allowed them to extend the Hoffman--Meeks half-space theorem \cite{Hoffman-Meeks} to the class of ESW-surfaces under suitable additional assumptions. They also proved that the Gaussian curvature is non-positive, that its zeros are isolated, and that the convex hull property holds, all of which are features shared with minimal surfaces.

Nearly twenty years later, Aledo, Espinar, and Gálvez \cite{AEG} extended the Korevaar--Kusner--Meeks--Solomon theory without any additional assumptions on $f$. They also classified ESW-surfaces whose Gaussian curvature does not change sign, obtaining results analogous to those of Klotz and Osserman \cite{Klotz-Osserman} for constant mean curvature surfaces. More precisely, they proved that a complete ESW-surface with $K\geq 0$ must be either a totally umbilical sphere, a plane, or a right circular cylinder. They also showed that a properly embedded ESW-surface with $K\leq 0$ is either a right circular cylinder or satisfies $f(0)=0$, that is, it is of minimal type. Later, Gálvez, Martínez, and Teruel \cite{GMT} improved this result for the case $K\leq 0$ by replacing proper embeddedness with completeness. More recently, Fernández, Gálvez, and Mira \cite{FGM} proved Bernstein-type theorems for elliptic Weingarten surfaces, while Fernández and Mira \cite{FerMira} revisited the rotational theory and the half-space phenomenon from a broader perspective.

Despite this progress, the finite-total-curvature theory for ESW-surfaces of minimal type has remained substantially less developed than in the minimal case. The main reason is that one no longer has a Weierstrass representation, and therefore the asymptotic behavior of the ends must be extracted directly from a fully nonlinear elliptic equation. In particular, the analysis of embedded ends of finite total curvature is much more delicate than in the classical minimal setting.

A decisive new input in this direction is the recent work of Barbieri, Gálvez, Lian, and Zhang \cite{BGLZ26}, where the asymptotic expansion at infinity of embedded ends of finite total curvature is established for uniformly elliptic Weingarten surfaces of minimal type, together with a maximum principle at infinity. These results provide the asymptotic information needed in our setting and allow us to combine the finite-total-curvature theory with the Alexandrov reflection method in the non-cylindrically bounded case.

The main purpose of this paper is to extend to elliptic special Weingarten surfaces of minimal type the finite-total-curvature theory that, in the minimal case, is encoded in the classical works of Jorge--Meeks and Schoen. More precisely, we prove that a complete connected embedded $f$-surface of finite total curvature with two embedded ends must be rotationally symmetric; in particular, it is one of the special catenoids constructed by Sa Earp and Toubiana. This gives a positive answer to a question posed by Sa Earp in 1993; see \cite{SaEarp2}. As an application, we prove that special catenoids and affine planes are the only complete connected embedded ESW-surfaces of minimal type whose absolute total curvature is less than $8\pi$. 

\subsection*{Organization of the paper}

The paper is organized as follows. In Section \ref{SectPre}, we recall the basic properties of ESW-surfaces of minimal type, review the rotational examples that will be used later, and collect the asymptotic information on embedded ends of finite total curvature that follows from \cite{BGLZ26}. In Section \ref{SectJM}, we prove a Jorge--Meeks type formula for complete embedded ESW-surfaces of minimal type and finite total curvature. Section \ref{SectAlexandrov} is devoted to the Alexandrov reflection method for catenoidal-type ends. After introducing the relevant Alexandrov functions, we adapt the method to our non-cylindrically bounded setting by means of tilted planes and prove a Schoen-type theorem. Finally, in Section \ref{SectApJM}, we combine the previous results to classify complete connected embedded ESW-surfaces of minimal type and finite total curvature whose absolute total curvature is less than $8\pi$.

\section{Preliminaries}\label{SectPre}

Let $\Sigma$ denote a connected oriented immersed surface in $\mathbb{R}^3$, possibly with boundary, and let $N$ be its unit normal. We write $I$, $II$, $H$, $K$, $q$, $k_1$, and $k_2$ for the first fundamental form, the second fundamental form, the mean curvature, the Gaussian curvature, the skew curvature, and the principal curvatures, respectively. Thus,
\[
2H=k_1+k_2,\qquad K=k_1k_2,\qquad q=H^2-K.
\]

\subsection{Elliptic special Weingarten surfaces of minimal type}

An oriented immersed surface $\Sigma\subset \mathbb{R}^3$ is called an \emph{elliptic special Weingarten surface of minimal type}, or simply an \emph{$f$-surface}, if its mean curvature and Gaussian curvature satisfy
\[
H=f(H^2-K),
\]
where $f\in C^0([0,+\infty))\cap C^1((0,+\infty))$, $f(0)=0$, and
\[
4t f'(t)^2<1\qquad \text{for all } t> 0.
\]
In this case the associated equation is elliptic, and the geometric maximum principle is available; see \cite{Braga-SaEarp1,Rosenberg-SaEarp1}.

\begin{definition}\label{def:tangent-point}
Let $\Sigma_1$ and $\Sigma_2$ be immersed surfaces in $\mathbb{R}^3$, and let $p\in \Sigma_1\cap \Sigma_2$. We say that $p$ is a \emph{tangent point} of $\Sigma_1$ and $\Sigma_2$ if
\[
T_p\Sigma_1=T_p\Sigma_2,\qquad N_1(p)=N_2(p),
\]
and either $p$ is an interior point of both surfaces, or else $p\in \partial\Sigma_1\cap \partial\Sigma_2$ and the interior conormal vectors of $\partial\Sigma_1$ and $\partial\Sigma_2$ coincide at $p$.
\end{definition}

Given an immersed surface $\Sigma\subset \mathbb{R}^3$ and a point $p\in \Sigma$, the surface can be written locally near $p$ as a graph over its tangent plane at $p$. Therefore, if $\Sigma_1$ and $\Sigma_2$ have a common tangent point $p$, then near $p$ they can be expressed as graphs of functions $u_1$ and $u_2$, respectively, over the same domain in the common tangent plane.

We write
\[
\Sigma_1\geq_p \Sigma_2
\]
if $\Sigma_1$ and $\Sigma_2$ have a common tangent point at $p$ and, in a neighborhood of $p$, the corresponding graphing functions satisfy $u_1\geq u_2$. Geometrically, this means that $\Sigma_1$ lies above $\Sigma_2$ near $p$.

\begin{theorem}[Geometric maximum principle, \cite{Braga-SaEarp1,Rosenberg-SaEarp1}]\label{geometric-maximum}
Let $\Sigma_1$ and $\Sigma_2$ be connected $f$-surfaces immersed in $\mathbb{R}^3$, for the same function $f$. If $\Sigma_1\geq_p \Sigma_2$ for some point $p$, then $\Sigma_1$ and $\Sigma_2$ coincide in a neighborhood of $p$. If, in addition, $\Sigma_1$ and $\Sigma_2$ are complete and without boundary, then $\Sigma_1=\Sigma_2$.
\end{theorem}

We will repeatedly use the following standard properties of $f$-surfaces; see \cite{AEG,SaEarp2,SaEarp1}:
\begin{itemize}
\item[(i)] the Gaussian curvature satisfies $K\leq 0$;
\item[(ii)] the zeros of $K$ are isolated unless the surface is a plane;
\item[(iii)] a point is umbilic if and only if $K=0$ at that point;
\item[(iv)] a compact $f$-surface with boundary is contained in the convex hull of its boundary;
\item[(v)] there is no closed $f$-surface in $\mathbb{R}^3$;
\item[(vi)] planes are the only $f$-surfaces with identically zero Gaussian curvature.
\end{itemize}

Throughout the paper we impose the stronger assumptions
\begin{equation}\label{elliptic-conditions}
\left\{
\begin{array}{l}
f\in C^1([0,+\infty)),\\[1mm]
f \text{ is non-negative},\\[1mm]
4t f'(t)^2\leq c<1 \text{ for some } c\in (0,1).
\end{array}
\right.
\end{equation}
These are precisely the hypotheses that will be used throughout the paper.

A word on the role of each condition is in order, since \eqref{elliptic-conditions} is stronger than the pointwise ellipticity $4tf'(t)^2<1$ mentioned in the Introduction. The regularity $f\in C^1([0,+\infty))$ is the smoothness under which the geometric maximum principle (Theorem \ref{geometric-maximum}) and the rotational theory of Sa Earp and Toubiana are available. The \emph{uniform} ellipticity $4tf'(t)^2\leq c<1$ is the standing hypothesis of the asymptotic theory of \cite{BGLZ26}: as we verify below (see the computation preceding Theorem \ref{thm:log-expansion}), it is equivalent to a uniform two-sided bound for the slope of the associated curvature relation, which is exactly the form of ellipticity required in \cite{BGLZ26}; the pointwise condition $4tf'(t)^2<1$ alone does not yield the uniform estimates at infinity on which our analysis of the ends rests. Finally, the sign condition $f\geq 0$, together with $f(0)=0$, places us in the minimal-type setting of Sa Earp and Toubiana: it guarantees the existence and uniqueness of the complete rotational examples $M_\tau$ (Theorem \ref{existence}) and the tangency and half-space principles (Theorem \ref{thm:SaEarp3}, Corollary \ref{thm:half-space}) that underlie the reflection arguments of Section \ref{SectAlexandrov}; see Remark \ref{rem:elliptic-implies-rotational-hypotheses}.

\subsection{Rotationally symmetric examples}

The rotational theory of elliptic Weingarten surfaces was developed in depth by Sa Earp and Toubiana \cite{SaEarp2,SaEarp1}; for a broader and more recent discussion, including singular and non-complete rotational examples, we refer the reader to \cite[Section 5.2]{FerMira}. In the present paper we only use the complete regular rotational examples of minimal type.

\begin{theorem}[Existence and uniqueness of rotational $f$-surfaces, \cite{SaEarp1}]\label{existence}
Let $f$ be an elliptic function satisfying $f(0)=0$ and 
\begin{equation}\label{inf-condition}
\liminf_{t\to 0^+} 4t f'(t)^2<1.
\end{equation}
Let $\tau>0$ satisfy
\begin{equation}\label{SaEarp-condition}
\frac{1}{\tau}<\lim_{t\to\infty}\big(\sqrt{t}-f(t)\big).
\end{equation}
Then there exists a unique (up to an ambient isometry) complete rotational $f$-surface $M_\tau$. Moreover, the generating curve of $M_\tau$ is the graph of a symmetric, strictly positive, convex $C^3$ function whose global minimum equals $\tau$.

Conversely, every complete rotational $f$-surface arises in this way.
\end{theorem}

We will refer to the surfaces $M_\tau$ as \emph{special catenoids}; for $f\equiv 0$ they are the standard catenoids.

From now on, every surface $M_\tau$ is oriented by the \emph{exterior} unit normal, namely the unit normal pointing into the connected component of $\mathbb{R}^3\setminus M_\tau$ that does not contain the rotation axis. With this convention, the mean curvature is the one used in \cite{SaEarp1}.

Depending on the generating curve, there are two possible behaviors: either $M_\tau$ is contained in a slab bounded by two parallel planes, or all its coordinate functions are proper. See Figure \ref{generatrix}. Under the standing assumptions \eqref{elliptic-conditions}, the first behavior cannot occur: a surface contained in a slab lies in a half-space, so Corollary \ref{thm:half-space} would force $M_\tau$ to be a plane, contrary to $M_\tau$ being a non-flat rotational example. Hence every such $M_\tau$ has proper coordinate functions.

\begin{figure}[ht]
\centering
\includegraphics{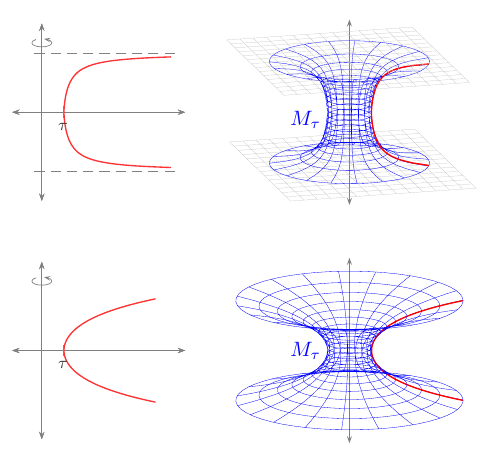}
\caption{Generating curve (left) and complete rotational examples $M_\tau$ (right).}
\label{generatrix}
\end{figure}

\begin{theorem}[Sa Earp--Toubiana, \cite{SaEarp1}]\label{thm:SaEarp2}
Let $f$ be an elliptic function satisfying $f(0)=0$ and \eqref{inf-condition}, and let $\tau>0$ satisfy \eqref{SaEarp-condition}. If $f$ is Lipschitz at $0$, then the surface $M_\tau$ is not asymptotic to any plane in $\mathbb{R}^3$. Moreover, there exists a classical catenoid $C$ such that, outside a compact set, $M_\tau$ lies in the component of $\mathbb{R}^3\setminus C$ containing the rotation axis of $C$.
\end{theorem}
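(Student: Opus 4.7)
\quad My plan is to exploit the rotational symmetry of $M_{\tau}$ by producing a monotone first integral along its generatrix, thereby reducing the geometric statement to an ODE comparison with the catenoid profile. By Theorem \ref{existence}, the generatrix of $M_{\tau}$ is the graph of a convex even function $r=g(z)$ with global minimum $g(0)=\tau$, defined on a maximal symmetric interval $I\ni 0$. The dichotomy stated after Theorem \ref{existence} says that $M_{\tau}$ is asymptotic to a pair of horizontal planes precisely when $I$ is bounded, so the task splits into ruling out that scenario and then trapping $M_{\tau}$ inside a catenoid on the axis side.

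Introduce the function
\begin{equation*}
P(z):=\frac{g(z)}{\sqrt{1+g'(z)^{2}}},
\end{equation*}
which is exactly the conserved quantity that characterizes the catenoid in the minimal case. A direct calculation, using the standard formula $2H=[1+g'^{2}-gg'']/[g(1+g'^{2})^{3/2}]$ for the mean curvature of a surface of revolution with outward unit normal, yields the identity $P'(z)=2\,g(z)\,g'(z)\,H(z)$. Since the elliptic conditions \eqref{ellipctic-conditions} force $f\geq 0$, we have $H=f(q)\geq 0$; and convexity of the generatrix with minimum at $0$ gives $g'(z)\geq 0$ for $z\geq 0$. Hence $P$ is non-decreasing on $I\cap[0,\infty)$, so $P(z)\geq P(0)=\tau$, which rewrites as the differential inequality
\begin{equation*}
g'(z)^{2}\leq \frac{g(z)^{2}}{\tau^{2}}-1, \qquad z\in I\cap[0,\infty).
\end{equation*}

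I would then compare $g$ with $\phi(z):=\tau\cosh(z/\tau)$, the solution of the catenoid ODE $\phi'=\sqrt{\phi^{2}/\tau^{2}-1}$, $\phi(0)=\tau$, defined on all of $\mathbb{R}$. Because the right-hand side $F(x)=\sqrt{x^{2}/\tau^{2}-1}$ is monotone on $[\tau,\infty)$, the standard ODE comparison principle yields $g(z)\leq\tau\cosh(z/\tau)$ on $I\cap[0,\infty)$, and by the even symmetry of $g$ on all of $I$. In particular $g$ cannot blow up at any finite $z$, so $I=\mathbb{R}$; this excludes the slab alternative of Theorem \ref{existence} and shows $M_{\tau}$ is not asymptotic to any horizontal plane, hence by rotational symmetry to no plane at all. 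Taking $C$ to be the catenoid of neck radius $\tau$ through $\{r=\tau,\,z=0\}$, the same inequality places $M_{\tau}$ in the closed axis-component of $\mathbb{R}^{3}\setminus C$; provided $M_{\tau}$ is not itself a catenoid, the inequality becomes strict outside a compact neighborhood of the neck circle, yielding the required inclusion in the open component outside a compact set.

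The principal obstacles I anticipate are: (i) pinning down the sign conventions so that $f\geq 0$ really translates to $H\geq 0$ with respect to the outward normal used in the computation of $P'$; (ii) handling the ODE comparison across $z=0$, where $F$ is only H\"older (not Lipschitz) at $x=\tau$, which is resolved by running the argument on $(0,\infty)$ first and extending by the even symmetry $g(-z)=g(z)$; and (iii) the strict inclusion outside a compact set, where the Lipschitz-at-$0$ hypothesis on $f$ enters to rule out pathological near-catenoidal behavior and ensure that $P$ is strictly increasing on a nontrivial interval, so that $C$ is tangent to $M_{\tau}$ only along the neck circle.
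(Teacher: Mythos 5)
The paper does not prove Theorem~\ref{thm:SaEarp2}; it is quoted from Sa Earp--Toubiana \cite{SaEarp1}, so there is no in-paper argument to compare against and your proposal must stand on its own. The first-integral idea ($P=g/\sqrt{1+g'^{2}}$, i.e.\ the flux-type quantity $rh'/\sqrt{1+h'^{2}}$ along the end) is indeed the right tool for analysing the profile ODE, but your execution has a fatal sign problem. The identity $P'=\pm 2gg'H$ holds with a sign determined by the orientation, and the orientation is not free here: hypothesis \eqref{SaEarp-condition} is precisely the solvability of $\sqrt{q_{0}}-f(q_{0})=1/\tau$ at the neck, which says that the principal curvature of the parallel circle there is $H-\sqrt{q}=-1/\tau$, i.e.\ the unit normal for which $H=f(q)$ holds points \emph{away} from the axis. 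With that normal one finds $P'\leq 0$ on $z\geq 0$ (when $f\geq 0$), hence $P\leq\tau$ and $g(z)\geq\tau\cosh(z/\tau)$: $M_{\tau}$ rises \emph{more slowly} than the catenoid of neck $\tau$ and lies in the component \emph{not} containing the axis. Your inequality is reversed. (Note also that Theorem~\ref{thm:SaEarp2} does not assume $f\geq 0$ --- non-negativity is only imposed from Theorem~\ref{thm:SaEarp3} onward --- so in the stated generality $H$ has no sign and $P$ is not monotone at all.)

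The deeper symptom that something is wrong is that your argument never genuinely uses the Lipschitz hypothesis on $f$ at $0$, whereas the theorem is false without it: for elliptic $f$ with $f(t)\sim c\sqrt{t}$ near $0$ (allowed by ellipticity, not Lipschitz), the surface $M_{\tau}$ really does lie in a slab and is asymptotic to two horizontal planes --- this is exactly the first alternative of the dichotomy recorded after Theorem~\ref{existence}, which your conclusion $g\leq\tau\cosh(z/\tau)$ would rule out unconditionally. The actual content of the theorem is a \emph{lower} bound on the height growth of the end, $h(r)\geq c\log r-C$ for some $c>0$, equivalently an upper bound $g(z)\leq c\cosh((z-z_{0})/c)$ outside a compact set; this is where a catenoid $C$ (thin and translated, not the neck-$\tau$ one) enters. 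It is obtained by showing that the decreasing first integral $P$ stays bounded away from zero along the end, via a Gronwall-type estimate of the schematic form $-P'=2rf(q)\leq 2rCq\leq C'P^{2}/r^{3}$, which integrates to $1/P(r)\leq 1/\tau+C''$; the Lipschitz bound $f(t)\leq Ct$ is indispensable here, since $f(t)\sim c\sqrt{t}$ instead gives $P\sim r^{-2c}\to 0$ and a bounded height. Without this step neither assertion of the theorem (not asymptotic to a plane; trapped inside some catenoid) is established, since a lower bound $g\geq\tau\cosh(z/\tau)$ does not even prevent $g$ from blowing up at finite $z$. The secondary issues you flag (the non-Lipschitz point of $x\mapsto\sqrt{x^{2}/\tau^{2}-1}$ at $x=\tau$, strictness of the inclusion) are real but minor by comparison.
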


\begin{theorem}[Sa Earp--Toubiana, \cite{SaEarp1}]\label{thm:SaEarp3}
Let $f$ be a non-negative elliptic function satisfying $f(0)=0$ and \eqref{inf-condition}. Assume that $f$ is Lipschitz at $0$ and that
\begin{equation}\label{SaEarp-condition2}
\lim_{t\to\infty}\big(\sqrt{t}-f(t)\big)=+\infty.
\end{equation}
Then, as $\tau\to 0$, the generating curve of $M_\tau$ converges to a ray orthogonal to the rotation axis. Furthermore, $M_\tau$ lies in the component of $\mathbb{R}^3\setminus C_\tau$ that does not contain the rotation axis of $C_\tau$, where $C_\tau$ is the catenoid whose generating curve is
\[
c_\tau(s)=\tau\cosh(\tau^{-1}s).
\]
\end{theorem}

\begin{figure}[ht]
\centering
\includegraphics{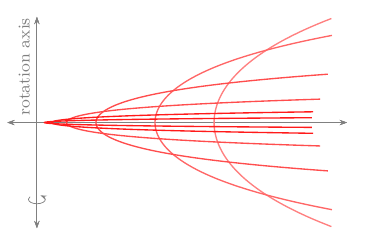}
\caption{Generating curves of rotational $f$-surfaces as $\tau \to 0$.}
\label{fig:generatrices}
\end{figure}

As a consequence, the family $\{M_\tau:\tau>0\}$ provides the catenoidal barriers needed in the half-space argument; see Figure \ref{fig:generatrices}.

\begin{corollary}[Half-space theorem for $f$-surfaces, \cite{SaEarp1}]\label{thm:half-space}
Let $f$ be a non-negative elliptic function, Lipschitz at $0$, satisfying $f(0)=0$, \eqref{inf-condition}, and \eqref{SaEarp-condition2}. Let $\Sigma$ be a complete connected properly immersed $f$-surface in $\mathbb{R}^3$. If $\Sigma$ is contained in a half-space, then $\Sigma$ is a plane.
\end{corollary}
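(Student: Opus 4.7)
The plan is to adapt the classical Hoffman--Meeks proof of the half-space theorem for minimal surfaces to the ESWMT setting, using the special catenoids $M_\tau$ of Sa Earp--Toubiana in place of minimal catenoids. The two analytic tools required are the interior tangency maximum principle for ESW-surfaces (Rosenberg--Sa Earp, Braga--Sa Earp) and unique continuation for solutions of the quasilinear elliptic equation $H=f(q)$.

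After rotating coordinates assume $\Sigma \subset \{z \geq 0\}$, and suppose for contradiction that $\Sigma$ is not flat. After a vertical translation, $\alpha := \inf_\Sigma z = 0$. If $\alpha$ were attained at some $p\in\Sigma$, the plane $\{z=0\}$ would be an ESWMT-surface tangent to $\Sigma$ at $p$ with $\Sigma$ locally on one side; the interior tangency maximum principle, unique continuation, and connectedness of $\Sigma$ would then force $\Sigma = \{z=0\}$, contradicting non-flatness. Hence $\alpha$ is not attained and, by properness, there exists a sequence $p_n\in\Sigma$ with $z(p_n)\to 0$ and $|p_n|\to\infty$.

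The standing hypothesis $\limsup_{t\to+\infty}4tf'(t)^2<1$ from \eqref{ellipctic-conditions} implies the Sa Earp--Toubiana condition \eqref{SaEarp-condition2} (per the Remark following Theorem~\ref{thm:SaEarp3}), so for every $\tau > 0$ the special catenoid $M_\tau$ belongs to the case of Theorem~\ref{existence} in which all coordinate functions are proper; in particular $M_\tau$ is not contained in any half-space. Moreover, by Theorem~\ref{thm:SaEarp3}, as $\tau\to 0^+$ the generatrix of $M_\tau$ tends to a horizontal ray orthogonal to the axis, so on fixed compact subsets away from the axis $M_\tau$ is arbitrarily close to a horizontal plane. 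Fix $\tau>0$ small, align the axis of $M_\tau$ with the $z$-axis, and consider a monotone one-parameter family of translates $M_\tau + v_s$ (for instance vertical translates, possibly combined with a shift of axis). Setting
\[
s^{\ast} := \inf\bigl\{\, s : (M_\tau + v_s)\cap\Sigma \neq \emptyset \,\bigr\},
\]
one uses the existence of the points $p_n$ together with the spreading of $M_\tau$ to show $s^{\ast}$ is finite; properness of $\Sigma$ then produces an interior contact point $q\in\Sigma\cap(M_\tau+v_{s^{\ast}})$, at which $\Sigma$ lies locally on one side of $M_\tau + v_{s^{\ast}}$.

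At $q$, the interior tangency maximum principle for ESW-surfaces gives that $\Sigma$ and $M_\tau + v_{s^{\ast}}$ coincide in a neighborhood of $q$; unique continuation and connectedness of $\Sigma$ then force the global identity $\Sigma = M_\tau + v_{s^{\ast}}$. But $M_\tau + v_{s^{\ast}}$ is not contained in any half-space, contradicting $\Sigma\subset\{z\geq 0\}$. The main obstacle is the rigorous execution of the sliding argument in the non-compact setting: one must choose a family of translates for which properness of $\Sigma$ and the asymptotic flattening supplied by Theorem~\ref{thm:SaEarp3} together guarantee that $s^{\ast}$ is finite and that a genuinely interior tangential contact is achieved at $s=s^{\ast}$; this is precisely where the Sa Earp--Toubiana catenoids take the role played by minimal catenoids in the original Hoffman--Meeks argument.
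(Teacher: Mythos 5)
The paper does not actually write out a proof of this corollary: it is quoted from Sa Earp--Toubiana with the remark that, once the family $\{M_\tau\}_{\tau>0}$ is known to exist (Theorem \ref{existence}), to leave every slab (Theorem \ref{thm:SaEarp2}) and to degenerate to a plane as $\tau\to 0$ (Theorem \ref{thm:SaEarp3}), ``the proof is the same as for minimal surfaces'' in Hoffman--Meeks; your proposal is exactly that intended argument, with the same two ingredients (the geometric comparison principle at a tangency and the special catenoids as barriers), so it matches the paper's approach. The one point I would insist you repair is the choice of sweep: sliding vertical translates of a \emph{single fixed} $M_\tau$ does not reproduce the Hoffman--Meeks mechanism, because both ends of every translate $M_\tau+v_s$ pass through all of $\{x_3>0\}$, so the set $\{s:(M_\tau+v_s)\cap\Sigma\neq\emptyset\}$ need not be bounded below and, even when $s^{\ast}$ is finite, properness of $\Sigma$ alone does not prevent the first contact from occurring only at infinity rather than at an attained tangency. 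The correct sweep is the one the paper sets up the tools for: fix the neck position and run $\tau\to 0^{+}$, working with the \emph{compact} pieces of the lower halves of the $M_\tau$ cut off inside a fixed ball, whose boundary circles are kept in regions disjoint from $\Sigma$ (the neck circles where $\Sigma$ is uniformly above the sweep by properness, and the outer circles below the plane $\{x_3=0\}$); Theorem \ref{thm:SaEarp3} then guarantees the sweep starts disjoint from $\Sigma$ and an open--closed argument produces an interior tangency, after which your maximum-principle and unique-continuation endgame is correct.
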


\begin{remark}\label{rem:elliptic-implies-rotational-hypotheses}
The assumptions in \eqref{elliptic-conditions} imply the hypotheses of Theorem \ref{thm:SaEarp3} and Corollary \ref{thm:half-space}. Indeed, since $f\in C^1([0,+\infty))$, the function $f$ is Lipschitz at $0$. Moreover, from $4tf'(t)^2\leq c$ we get $|f'(t)|\leq \sqrt{c}/(2\sqrt{t})$ for $t>0$, and hence, as $f(0)=0$,
\[
f(t)=\int_0^t f'(s)\,ds\leq \sqrt{c}\,\sqrt{t}\qquad \text{for all } t\geq 0 .
\]
Therefore $\sqrt{t}-f(t)\geq (1-\sqrt{c})\sqrt{t}\to+\infty$ as $t\to+\infty$, which is condition \eqref{SaEarp-condition2}; condition \eqref{inf-condition} is immediate.
\end{remark}

\subsection{Finite total curvature}\label{SectFTC}

A complete immersed surface $\Sigma\subset \mathbb{R}^3$ is said to have \emph{finite total curvature} if
\[
\int_\Sigma |K|\, dA<+\infty.
\]

The next result records the structural facts on finite-total-curvature ends that will be used later.

\begin{lemma}\label{LemFinite}
Let $\Sigma\subset \mathbb{R}^3$ be a complete connected immersed $f$-surface satisfying \eqref{elliptic-conditions}, with finite total curvature and embedded ends. Then $\Sigma$ has finite conformal type. Moreover, every end of $\Sigma$ is properly embedded, has a limit unit normal at infinity, and, after possibly truncating the end and applying an isometry of $\mathbb{R}^3$, it can be written as the graph of a function defined over the complement of a compact set in a plane.
\end{lemma}

\begin{proof}
Finite conformal type follows from Huber's theorem \cite{Huber1}. Each end of $\Sigma$ is then an annular surface which, once truncated, is a complete embedded $f$-surface with compact boundary and finite total curvature. The description of the ends is therefore exactly \cite[Lemma 2.1]{BGLZ26}, which is stated for complete embedded surfaces possibly with compact boundary.
\end{proof}

\subsection{Asymptotic behavior of embedded ends}\label{SectAsymptotic}

Let $E$ be an embedded end of a complete connected immersed $f$-surface $\Sigma$ satisfying \eqref{elliptic-conditions} and having finite total curvature. By Lemma \ref{LemFinite}, after a rigid motion and truncation, we may write
\[
E=\{(x,u(x))\in \mathbb{R}^2\times \mathbb{R}: x\in \mathbb{R}^2\setminus K\},
\]
for some smooth function $u$ defined over the complement of a compact set $K\subset \mathbb{R}^2$, and with
\[
\lim_{|x|\to +\infty}N(x)=N_\infty.
\]
After a rotation we may assume
\(
N_\infty=(0,0,1).
\)
The relation
\(
H=f(H^2-K)
\)
defines a symmetric elliptic Weingarten relation in the $(\kappa_1,\kappa_2)$-plane. By \cite[Section 2.1]{FGM}, every connected branch of this relation can be written as
\[
\kappa_2=\varphi(\kappa_1),
\]
where $\varphi$ is decreasing and satisfies
\(
\varphi\circ \varphi={\rm Id}.
\)
Since the relation is symmetric, each connected branch is invariant under the exchange $(\kappa_1,\kappa_2)\mapsto(\kappa_2,\kappa_1)$, so $\varphi$ is a decreasing involution of an interval onto itself and has a unique fixed point $t_0$. At the fixed point, the surface relation gives $t_0=f(0)=0$, since $H=t_0$ and $H^2-K=0$ there. Hence, differentiating $\varphi\circ \varphi={\rm Id}$ we get
\(
\varphi'(0)^2=1.
\)
As $\varphi'<0$, we conclude that
\begin{equation}\label{eq:phi-prime-zero}
\varphi'(0)=-1.
\end{equation}

In fact, the uniform ellipticity assumption in \eqref{elliptic-conditions} translates into a uniform two-sided bound for $\varphi'$, which is the form of ellipticity required in \cite{BGLZ26}. Indeed, writing $\kappa_2=\varphi(\kappa_1)$ with $\kappa_1\geq\kappa_2$, we have $2\sqrt{q}=\kappa_1-\kappa_2$, where $q=H^2-K$, and differentiating $H=f(q)$ along the branch yields
\[
\frac{1+\varphi'}{2}=f'(q)\,\sqrt{q}\,(1-\varphi'),
\qquad\text{that is,}\qquad
\varphi'=\frac{s-1}{s+1},
\qquad s:=2\sqrt{q}\,f'(q).
\]
By \eqref{elliptic-conditions} we have $|s|\leq\sqrt{c}<1$, and since $s\mapsto (s-1)/(s+1)$ is increasing on $(-1,1)$,
\[
-\frac{1}{\Lambda_0}\leq \varphi'(t)\leq -\Lambda_0,
\qquad
\Lambda_0:=\frac{1-\sqrt{c}}{1+\sqrt{c}}\in(0,1),
\]
for every $t$ in the branch. In particular $\varphi$ is locally Lipschitz. Therefore the graphical representation of $E$ is a uniformly elliptic Weingarten graph of minimal type in the sense of \cite{BGLZ26}, and the asymptotic results of \cite{BGLZ26} apply to it. Since $\varphi'(0)=-1$ by \eqref{eq:phi-prime-zero}, the end falls under the case of \cite[Theorem 1.3]{BGLZ26} that yields logarithmic growth at infinity.

\begin{theorem}[Theorems 1.2 and 1.3 in \cite{BGLZ26}]\label{thm:log-expansion}
Let $E$ be as above, and let
\[
E=\{(x,u(x))\in \mathbb{R}^2\times \mathbb{R}: x\in \mathbb{R}^2\setminus K\}
\]
be its graphical representation with
\[
\lim_{|x|\to+\infty}N(x)=(0,0,1).
\]
Then, after a vertical translation, exactly one of the following two possibilities occurs:
\[
u(x)>0 \, \, \text{ or } \, \, u(x)<0 \qquad \text{for all } x\in \mathbb{R}^2\setminus K.
\]

Moreover, the limit
\[
u_\infty:=\lim_{|x|\to+\infty}u(x)
\]
exists in $\mathbb{R}\cup\{\pm\infty\}$. If we assume that $ u(x)>0 $ for all $x\in \mathbb{R}^2\setminus K$, then there exist constants $d\geq 0$ and $c_0\in \mathbb{R}$ such that, for every $0<\alpha<1$,
\begin{equation}\label{eq:log-expansion}
u(x)=d\log |x|+c_0+O(|x|^{-\alpha})
\qquad \text{as } |x|\to +\infty.
\end{equation}
Furthermore,
\begin{equation}\label{eq:gradient-hessian-expansion}
|Du(x)|=O(|x|^{-1}),
\qquad
|D^2u(x)|=O(|x|^{-2})
\qquad \text{as } |x|\to +\infty.
\end{equation}
\end{theorem}


As a first consequence, the one-ended case is immediate.

\begin{corollary}\label{cor:one-end-plane}
Let $\Sigma\subset \mathbb{R}^3$ be a complete connected embedded $f$-surface satisfying \eqref{elliptic-conditions}, of finite total curvature, and with one end. Then $\Sigma$ is a plane.
\end{corollary}

\begin{proof}
By Theorem \ref{thm:log-expansion}, after a vertical translation the unique end is contained in one side of a horizontal plane outside a compact set. Since the remaining part of the surface is compact, after a further vertical translation the whole surface is contained in a half-space. Note that $\Sigma$ is properly immersed, since its end is a proper graph and the complement of the end in $\Sigma$ is compact. Hence Corollary \ref{thm:half-space} implies that $\Sigma$ is a plane.
\end{proof}

\section{A Jorge--Meeks type formula}\label{SectJM}

We now derive a Jorge--Meeks type formula for complete $f$-surfaces with finite total curvature and embedded ends.

\begin{theorem}\label{ThmJM}
Let $\Sigma\subset \r^3$ be a complete connected $f$-surface satisfying \eqref{elliptic-conditions}, of finite total curvature and with embedded ends. If $g$ denotes the genus of $\Sigma$ and $k$ its number of ends, then
\begin{equation}\label{JMFormula}
\int_\Sigma K\, dA = 4\pi(1-g-k).
\end{equation}
\end{theorem}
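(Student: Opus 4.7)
The plan is to replicate the Simons-formula proof just given for the minimal case, but with the adapted Codazzi pair $(I_f, II_f)$ from Section \ref{SectCodazzi} in place of $(I, II)$. By construction, $H_f \equiv 0$, $\mathcal{K}_f = -q$, and the $(2,0)$-part of $II_f$ defines a quadratic differential $Q_f$ holomorphic with respect to the conformal structure of $I_f$; moreover the umbilic sets of $(I,II)$ and $(I_f, II_f)$ agree and the Simons identity $\Delta^{I_f}\ln|II_f| = 2\mathcal{K}_f$ holds away from the umbilics.

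Before integrating, I would verify that $I_f$ inherits the same end structure as $I$. Since $q \to 0$ at each end by Lemma \ref{LemFinite}, the defining formula \eqref{If} yields $I_f = I + o(1)$ pointwise at infinity, so $I_f$ is complete and isometric to $I$ near each puncture; in particular each end is a punctured conformal disk for $I_f$ and the conformal factor of $I_f$ satisfies the same $c_i|z-p_i|^{-4}$ asymptotic \eqref{Areaf}, using the pointwise identity $dA_f = dA$ (an algebraic consequence of \eqref{If}). The differential $Q_f$ then extends meromorphically to each puncture.

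Next I would integrate $\Delta^{I_f}\ln|II_f| = 2\mathcal{K}_f$ over $\Sigma_r = \Sigma \setminus \left(\bigcup_{i=1}^k D_i^*(r) \cup \bigcup_{j=1}^m D_j(r)\right)$ and let $r \to 0$. Around an interior umbilic $x_j$ the boundary term contributes $-2\pi\,\mathrm{Ord}_{x_j}Q_f$, while at each end the combination $|II_f| = |Q_f|/\lambda_f$ together with the $|z-p_i|^{-4}$ conformal-factor asymptotics reproduces the minimal-surface computation verbatim and yields $-2\pi\,\mathrm{Ord}_{p_i}Q_f - 8\pi$. Summing the orders of $Q_f$ via Poincar\'e--Hopf on the compactified surface gives $-2\chi(\Sigma_g) = -4(1-g)$, so $\int_\Sigma \mathcal{K}_f\, dA_f = 4\pi(1 - g - k)$.

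To close, I need to identify $\int_\Sigma K \, dA$ with $\int_\Sigma \mathcal{K}_f \, dA_f$. For this I would invoke Shiohama's asymptotic formula applied to each of the complete, finitely connected, finite-total-curvature surfaces $(\Sigma, I)$ and $(\Sigma, I_f)$: each total curvature equals $2\pi\chi(\Sigma)$ minus a limiting $r^2$-area term assembled from the end constants. The pointwise equality $dA = dA_f$ forces these asymptotic area terms to coincide end by end, so the two total curvatures agree, proving the formula. The most delicate step is confirming the precise $c_i|z-p_i|^{-4}$ end asymptotics for $I_f$: this is what ensures the boundary integral at each end reproduces the constant $-8\pi$ that drives the formula, and it ultimately rests on Lemma \ref{LemFinite} together with the algebraic identity $dA = dA_f$.
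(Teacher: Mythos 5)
Your proposal follows essentially the same route as the paper: transfer the Simons-formula argument to the adapted Codazzi pair $(I_f,II_f)$, verify via $q\to 0$ that $I_f$ is complete, isometric to $I$ at infinity with the same $c_i|z-p_i|^{-4}$ conformal-factor asymptotics and $dA=dA_f$, integrate over $\Sigma_r$ with Poincar\'e--Hopf to get $\int_\Sigma \mathcal{K}_f\,dA_f = 4\pi(1-g-k)$, and then invoke Shiohama's asymptotic formula on both metrics to conclude $\int_\Sigma K\,dA = \int_\Sigma \mathcal{K}_f\,dA_f$. All the key ingredients and their roles match the paper's proof, so the proposal is correct and not a genuinely different argument.
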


\begin{proof}
By Lemma \ref{LemFinite}, $\Sigma$ has finite conformal type, and each end is, after a rigid motion, the graph of a function defined over the complement of a compact disk in a plane. Moreover, by Theorem \ref{thm:log-expansion}, the gradient and Hessian of the graph tend uniformly to zero at infinity. More precisely, in the graph coordinates of an end, the induced metric reads $g=\langle\cdot,\cdot\rangle+du\otimes du$, with $|Du(x)|=O(|x|^{-1})$. Hence $g$ is uniformly equivalent to the Euclidean metric of the exterior planar domain, with constants tending to $1$ at infinity. In particular, the induced metric on each end is quasi-isometric to the Euclidean metric on an exterior planar domain; see \cite{Kanai}.

Since $K\leq 0$ and $\int_\Sigma |K|\, dA<+\infty$, we may apply the description of complete open surfaces of finite total curvature in \cite{Hulin-Troyanov}. In the terminology of \cite{Hulin-Troyanov}, each end carries a well-defined order, which is invariant under quasi-isometries and equals $-2$ for the flat metric on an exterior planar domain. The quasi-isometry above thus implies that each puncture corresponding to an embedded end has order $-2$ in the sense of \cite[Lemma 2.3]{Hulin-Troyanov}. Therefore, \cite[Theorem 2.9]{Hulin-Troyanov} yields
\[
\int_\Sigma K\, dA = 2\pi\Big(\chi(\overline{\Sigma})-2k\Big).
\]
Since $\chi(\overline{\Sigma})=2-2g$, we conclude that
\[
\int_\Sigma K\, dA = 2\pi(2-2g-2k)=4\pi(1-g-k),
\]
which proves \eqref{JMFormula}.
\end{proof}


\section{Alexandrov reflection method for catenoidal type ends}\label{SectAlexandrov}

A. Alexandrov \cite{Alexandrov1} characterized spheres as the only closed connected surfaces embedded in $\mathbb{R}^3$ with constant (non-zero) mean curvature. This is a major theorem, and the procedure introduced by Alexandrov has had a remarkable impact on the development of Differential Geometry. This technique is called the Alexandrov reflection method, and it is based on the maximum principle, one of the classical tools in the theory of second order elliptic partial differential equations; see \cite{Hopf3,Hopf4,Serrin}. 

\subsection{The Alexandrov reflection method}
\label{alexandrov-reflection}

Alexandrov's idea, roughly speaking, is to compare the surface with successive reflections of suitable portions of itself, looking for a first tangency point, and thereby obtaining a plane of symmetry for the original surface. In order to apply this method later, we present here the Alexandrov reflection method following \cite{AEG,Espinar,KKS}, which extends the classical compact argument to a non-compact setting.

Let $\Sigma$ be a connected properly embedded surface in $\mathbb{R}^3$. Then $\Sigma$ divides $\mathbb{R}^3$ into two connected components. Let us denote by $N$ a unit normal vector field globally defined on $\Sigma$. Set $\mathcal{C}(N)$ to be the component of $\mathbb{R}^3\setminus \Sigma$ pointed to by $N$. Note that $\partial\mathcal{C}(N)=\Sigma$.

Consider a fixed plane $\mathcal{P}\subset \mathbb{R}^3$ with normal unit vector $n$. For $t\in\mathbb{R}$ we denote by $\mathcal{P}_t$ the plane parallel to $\mathcal{P}$ at signed distance $t$, that is, $\mathcal{P}_t=\mathcal{P}+tn$. Hence, the family $\{\mathcal{P}_t\}_{t\in\mathbb{R}}$ is a foliation of $\mathbb{R}^3$ by planes parallel to $\mathcal{P}$. Set $\mathcal{P}_{t^-}$ and $\mathcal{P}_{t^+}$ to be the closed half-spaces, lower and upper respectively, determined by the plane $\mathcal{P}_t$, i.e.,
\begin{equation*}
\mathcal{P}_{t^-}=\underset{s\leq t}{\bigcup}\mathcal{P}_s
\qquad \mbox{and} \qquad
\mathcal{P}_{t^+}=\underset{s\geq t}{\bigcup}\mathcal{P}_s.
\end{equation*}
For any set $G\subset\mathbb{R}^3$, let $G_{t^+}$ be the portion of $G$ above the plane $\mathcal{P}_t$ and let $G_{t^+}^*$ be the reflection of this portion through the plane $\mathcal{P}_t$. Then we can write
\begin{equation}\label{reflection}
G_{t^+}=G\cap\mathcal{P}_{t^+}
\quad \mbox{and} \quad
G_{t^+}^*=\{p+(t-r)n:p\in\mathcal{P},\,p+(t+r)n\in G_{t^+}\};
\end{equation}
we define analogously the sets $G_{t^-}$ and $G_{t^-}^*$.

Given an open set $W\subset\mathcal{C}(N)$, consider the portion $S$ of the surface $\Sigma$ defined by $S=\partial W\cap \Sigma$. If the set $S_{t^+}$ is non-empty and $S_{t^+}^*\subset \overline{W}$, we write $S_{t^+}^*\geq S_{t^-}$, which means that the reflection of the set $S_{t^+}$ through the plane $\mathcal{P}_t$ lies above the set $S_{t^-}$, considering the height from the plane $\mathcal{P}$.

When $S$ is not empty, we define a function $\Lambda_1$, called the Alexandrov function associated with $S$.

We start with the definition of the domain of the function $\Lambda_1$, denoted by $\mathcal{D}$. Given a point $p\in\mathcal{P}$, let $L_p$ be the perpendicular line to $\mathcal{P}$ passing through $p$, so that
\[
L_p=\{p+tn:t\in\mathbb{R}\}.
\]
We say that a point $p\in\mathcal{P}$ belongs to $\mathcal{D}$ if there exists $t_1(p)\in\r$ such that
\[
\{p+tn:t> t_1(p)\}\cap\overline{W}=\emptyset,
\qquad
\mathbf{P}_1(p):=p+t_1(p)n\in S,
\]
and one of the following occurs (see Figure \ref{F6-art}):
\begin{itemize}
 \item[(i)] $L_p$ and $S$ are tangential at $\mathbf{P}_1(p)$;
 \item[(ii)] $L_p$ and $S$ are transversal at $\mathbf{P}_1(p)$ and there exists $t_2(p)\in\r$, defined as
 \[
 t_2(p):=\inf\{t\in\r : \{p+sn: t<s<t_1(p)\}\subset W\},
 \]
 such that
 \[
 \{p+tn:t_2(p)<t< t_1(p)\}\subset W
 \]
 and
 \[
 \mathbf{P}_2(p):=p+t_2(p)n\in S;
 \]
 \item[(iii)] $L_p$ and $S$ are transversal at $\mathbf{P}_1(p)$ and
 \[
 \{p+tn:t< t_1(p)\}\subset W.
 \]
\end{itemize}

\begin{figure}[ht!]
\centering
\includegraphics{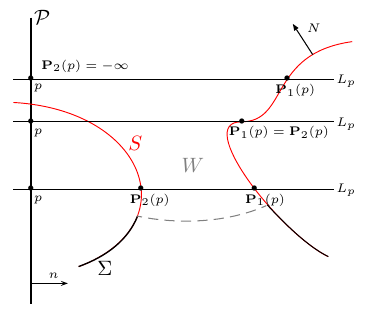}
\caption{$p \in \mathcal{D}$.}\label{F6-art}
\end{figure}

Geometrically, $\mathcal{D}$ contains all points $p$ in the plane $\mathcal{P}$ such that the lines $L_p$, coming from infinity, enter $\overline{W}$ through $S$ either tangentially, transversally while staying in $W$, or attempting to leave $\overline{W}$ again through $S$.

When $L_p$ and $S$ are tangential at $\mathbf{P}_1(p)$, we set
\[
\mathbf{P}_2(p)=\mathbf{P}_1(p),
\qquad
t_2(p)=t_1(p).
\]
We say that $\mathbf{P}_1(p)$ and $\mathbf{P}_2(p)$ are the first and second points of contact of the set $L_p\cap\overline{W}$ as $t$ decreases from $+\infty$, respectively.

Note that for a point $p\in \mathcal{D}$ it may happen that the line $L_p$ has a transversal first contact point $\mathbf{P}_1(p)\in S$ and a tangential second contact point $\mathbf{P}_2(p)\in S$ as $t$ decreases from $+\infty$, while still satisfying
\[
\{p+tn:t<t_2(p)\}\subset\overline{W}.
\]

\begin{remark}
Let $p\in\mathcal{P}$ be such that $L_p$ is transversal to $S = \Sigma \cap \partial W$ at $\mathbf{P}_1 (p)$. If there exists $t_2(p)\in\r$ such that
\[
\{p+tn:t_2(p)<t< t_1(p)\}\subset W
\]
and
\[
\mathbf{P}_2(p):=p+t_2(p)n\in \partial W \setminus S,
\]
then $p \not \in \mathcal{D}$ (see Figure \ref{F5-art}).

\begin{figure}[ht!]
\centering
\includegraphics{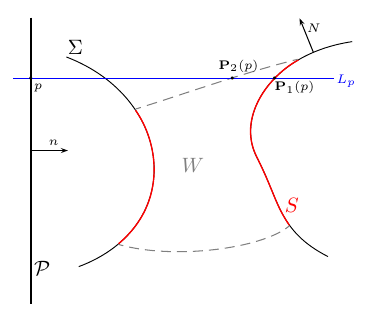}
\caption{$p \not \in \mathcal{D}$.}\label{F5-art}
\end{figure}

\end{remark}

We define $\Lambda_1:\mathcal{D}\rightarrow\{-\infty\}\cup\mathbb{R}$, the Alexandrov function associated with $S$, by
\begin{equation}\label{Alexandrov-function}
\Lambda_1(p)=\left\{
\begin{array}{cl}
\dfrac{t_1(p)+t_2(p)}{2}, & \mbox{if there exist }\mathbf{P}_1(p) \mbox{ and } \mathbf{P}_2(p), \mbox{ i.e. in cases (i) and (ii);}\\[3mm]
-\infty, & \mbox{if there exists only } \mathbf{P}_1(p), \mbox{ i.e. in case (iii).}
\end{array}
\right.
\end{equation}

When $\Lambda _1$ is finite, note that $\Lambda_1(p)$ is the value such that the reflection of the point $\mathbf{P}_1(p)$ through the plane $\mathcal{P}_{\Lambda_1(p)}$ is exactly $\mathbf{P}_2(p)$. Moreover, for every $t$ with $\Lambda_1(p)\leq t<t_1(p)$, the reflection of $\mathbf{P}_1(p)$ through the plane $\mathcal{P}_{t}$ is contained in $\overline{W}$. When $\Lambda_1(p)=-\infty$, the reflection of $\mathbf{P}_1(p)$ through the plane $\mathcal{P}_{t}$, for all $t < t_1(p)$, is contained in $W$.

\begin{remark}
For an illustration of the Alexandrov function associated with a surface and its domain, consider $\Sigma$ to be the complete vertical catenoid of necksize $1$, $\mathcal{C}(N)$ the connected component containing the revolution axis (the $z$-axis), $W$ the set of points in $\mathcal{C}(N)$ with non-negative third coordinate, and
\[
S=\partial W\cap \Sigma,
\]
that is, the upper vertical half-catenoid. If we take
\[
\mathcal{P}=\{z=0\}
\qquad\mbox{with normal } n=(0,0,-1),
\]
then
\[
\mathcal{D}=\{p\in\mathcal{P}:|p|\geq 1\}
\]
and
\begin{equation*}
\Lambda_1(p)=\left\{
\begin{array}{cl}
0, & \mbox{ if } |p|=1,\\[1mm]
-\infty, & \mbox{ if } |p|>1.
\end{array}
\right.
\end{equation*}
\end{remark}

Throughout this section, $\partial S$ denotes the boundary of $S$ regarded as a subset of $\partial W$, that is, the set of points of $S$ that are limits of points of $\partial W\setminus S$.

\begin{definition}
A point $p\in\mathcal{D}$ is called a local interior maximum for $\Lambda_1$ if there exists a neighborhood $U$ of $p$ in $\mathcal{P}$ such that for any $q\in U\cap \mathcal{D}$ we have
\[
\mathbf{P}_1(q),\mathbf{P}_2(q)\notin \partial S
\qquad\mbox{and}\qquad
\Lambda_1(q)\leq\Lambda_1(p).
\]
Any other local maximum of $\Lambda_1$ will be called a local boundary maximum.
\end{definition}

\begin{definition}\label{reflection-contact}
A first local point of reflection for $S$ with respect to the plane $\mathcal{P}$ with normal $n$ is defined to be a point $\mathbf{P}_2(p)$ such that $p\in\mathcal{D}$ and is a local maximum of $\Lambda_1$, that is, there exists a neighborhood $U$ of $p$ in $\mathcal{P}$ such that
\[
\Lambda_1(q)\leq\Lambda_1(p)
\qquad\mbox{for any } q\in U\cap\mathcal{D}.
\]
\end{definition}

The above definitions are justified through the next lemma, which shows that the Alexandrov reflection method can be applied to non-compact surfaces. We emphasize that the ambient surface $\Sigma$ has no boundary, whereas the subset $S=\partial W\cap\Sigma$ may have non-empty relative boundary $\partial S$ in $\Sigma$.

\begin{lemma}[Alexandrov reflection method]\label{Alexandrov-principle}
Let $\Sigma$ be a connected properly embedded $f$-surface. Let $\mathcal{P}$ be a plane in $\mathbb{R}^3$ with normal $n$. If, relative to the subsets $W\subset \mathcal{C}(N)$ and $S\subset \Sigma$, the Alexandrov function $\Lambda_1$ has a local interior maximum value $t_0$ at $p\in\mathcal{D}$, then the plane $\mathcal{P}_{t_0}$ is a plane of symmetry for $\Sigma$.
\end{lemma}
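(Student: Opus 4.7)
The plan is to produce a tangential contact between $\Sigma$ and its reflection through $\mathcal{P}_{t_0}$, apply the geometric comparison principle to force local coincidence at a specific point, and then propagate the coincidence by an open–closed argument using connectedness of $\Sigma$.

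First, I would set $\Sigma^{\ast}:=(\Sigma_{t_0^+})^{\ast}$, the reflection through $\mathcal{P}_{t_0}$ of the part of $\Sigma$ lying in the upper half-space $\mathcal{P}_{t_0^+}$. Property (i) of Definition \ref{family} guarantees $\Sigma^{\ast}\in\mathfrak{F}$, and property (ii) allows us to restrict to any open subpiece. The equality $\Lambda_1(p)=t_0$ means by \eqref{Alexandrov-function} that the reflection of $\mathbf{P}_1(p)$ through $\mathcal{P}_{t_0}$ is exactly $\mathbf{P}_2(p)$; hence $\mathbf{P}_2(p)\in\Sigma\cap\Sigma^{\ast}$. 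The local interior maximum hypothesis says that for every $q$ in a neighborhood $U$ of $p$ in $\mathcal{D}$, we have $\Lambda_1(q)\leq t_0$ and $\mathbf{P}_1(q),\mathbf{P}_2(q)\notin\partial S$, which translates geometrically into the statement that the reflected image of $\mathbf{P}_1(q)$ through $\mathcal{P}_{t_0}$ lies below $\mathbf{P}_2(q)$ along $L_q$, hence inside $\bar{W}$. Consequently, a neighborhood of $\mathbf{P}_2(p)$ in $\Sigma^{\ast}$ is contained in $\bar{W}$.

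Second, I would argue that $\mathbf{P}_2(p)$ is a tangent point of the pair $(\Sigma,\Sigma^{\ast})$, with $\Sigma^{\ast}$ lying on the $\bar{W}$-side of $\Sigma$ there. Indeed, if the tangent planes of the two smooth surfaces disagreed at $\mathbf{P}_2(p)$, the transversal intersection would push part of $\Sigma^{\ast}$ outside $\bar{W}$, contradicting the conclusion of the previous paragraph. Once tangency is established we have $\Sigma\geq_{\mathbf{P}_2(p)}\Sigma^{\ast}$ (with appropriate orientation choice), $\mathbf{P}_2(p)$ is an interior tangent point because $\mathbf{P}_2(p)\notin\partial S$, and the geometric comparison principle (Theorem \ref{geometric-comparison}) forces $\Sigma=\Sigma^{\ast}$ in a neighborhood of $\mathbf{P}_2(p)$.

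Third, I would propagate this local coincidence. Consider the set
\begin{equation*}
\mathcal{A}=\{q\in\Sigma\cap\mathcal{P}_{t_0^-}:\Sigma \text{ and } \Sigma^{\ast} \text{ coincide in a neighborhood of } q\}.
\end{equation*}
The set $\mathcal{A}$ is open by definition, non-empty by the previous step, and closed in the relative topology of $\Sigma\cap\mathcal{P}_{t_0^-}$, because at any accumulation point the two surfaces again meet tangentially with one on a fixed side of the other and the comparison principle applies once more. Using that $\mathfrak{F}$ is closed under taking subsurfaces, the classical Alexandrov propagation combined with the connectedness of $\Sigma$ yields $\Sigma\cap\mathcal{P}_{t_0^-}\subset\Sigma^{\ast}$; interchanging the roles of the two half-spaces via the same reflection gives the reverse inclusion, and therefore $\mathcal{P}_{t_0}$ is a plane of symmetry of $\Sigma$.

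The main obstacle is the careful verification in the second step that $\Sigma^{\ast}$ actually stays on the $\bar{W}$-side of $\Sigma$ throughout a neighborhood of $\mathbf{P}_2(p)$ and that the two surfaces are tangent there. The Alexandrov function $\Lambda_1$ may take the value $-\infty$, so some nearby vertical lines $L_q$ might fail to meet $\bar{W}$ altogether; one must argue that this does not interfere with the one-sided configuration near $\mathbf{P}_2(p)$, and use both the sub-$t_0$ bound and the transversality/tangency dichotomy at $\mathbf{P}_1(p)$ to rule out any transversal crossing between $\Sigma^{\ast}$ and $\Sigma$. Once that is settled, the remaining steps mirror the standard Alexandrov reflection arguments carried out in \cite{AEG,Espinar,KKS}.
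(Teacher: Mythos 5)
Your proposal follows the same strategy as the paper's proof: translate the local maximality of $\Lambda_1$ at $p$ into the one--sided inclusion of the reflected piece $S_{t_0^+}^*$ in $\bar{W}$ near $\mathbf{P}_2(p)$, deduce a tangential contact, apply the geometric comparison principle, and propagate by connectedness. The translation of $\Lambda_1(q)\leq t_0$ into $t_2(q)\leq t_0-(t_1(q)-t_0)$, and hence $\mathbf{P}_1(q)^*\in\bar{W}$, is exactly the paper's computation; note only that the reflected point lies \emph{above} $\mathbf{P}_2(q)$ along $L_q$ (at height at least $t_2(q)$), not below it, though the conclusion $\mathbf{P}_1(q)^*\in\bar{W}$ is unaffected.

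The one genuine gap is your claim that $\mathbf{P}_2(p)$ is always an \emph{interior} tangent point ``because $\mathbf{P}_2(p)\notin\partial S$''. The definition of $\Lambda_1$ in \eqref{Alexandrov-function} allows the degenerate case in which $L_p$ meets $S$ tangentially, so that $\mathbf{P}_1(p)=\mathbf{P}_2(p)$ and $t_1(p)=t_2(p)=t_0$: the contact point then lies on the reflecting plane $\mathcal{P}_{t_0}$ itself and is a \emph{boundary} point of the reflected piece $S_{t_0^+}^*$ (and of $S_{t_0^-}$), even though it is an interior point of $S$. There the interior comparison does not apply directly; one must observe that the interior conormals of the two pieces coincide along $\mathcal{P}_{t_0}$ and invoke the boundary maximum principle, Theorem \ref{Hopf-boundary}, which is built into Definition \ref{geometric-principle}. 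The paper's proof explicitly splits into these two cases --- an interior tangent point when $\mathbf{P}_1(p)\neq\mathbf{P}_2(p)$ and a boundary tangent point when $\mathbf{P}_1(p)=\mathbf{P}_2(p)$ --- before applying Theorem \ref{geometric-comparison}. Once this case is added, your argument (including the open--closed propagation, which the paper leaves implicit) is complete.
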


\begin{proof}
We compare the surface $S$ with the reflection $S_{t_0^+}^*$ of $S_{t_0^+}$ through the plane $\mathcal{P}_{t_0}$. Since $\Lambda_1(p)=t_0$, the point $\mathbf{P}_1(p)$ is reflected to $\mathbf{P}_2(p) \in S \cap S_{t_0^+}^*$.

By the definition of local interior maximum, there exists a neighborhood $U\subset\mathcal{P}$ of $p$ such that for every $q\in U\cap\mathcal{D}$ one has
\[
\mathbf{P}_1(q),\mathbf{P}_2(q)\notin \partial S
\qquad\mbox{and}\qquad
\Lambda_1(q)\leq t_0.
\]
Since $\mathbf{P}_1(p)\notin \partial S$, after shrinking $U$ if necessary we may choose $\epsilon>0$ such that $S\cap \mathbb{B}^3(\mathbf{P}_1(p),\epsilon)$ is a topological disk contained in the interior of $S$.

We claim that, after perhaps shrinking $U$ once more, every point $q\in U$ satisfies one of the following two alternatives:
\begin{itemize}
\item either $q\in\mathcal{D}$ and $\Lambda_1(q)\leq t_0$;
\item or $L_q\cap\overline{W}=\emptyset$ in a neighborhood of $\mathbf{P}_1(p)$.
\end{itemize}

Assume by contradiction that this is false. Then there exists a sequence $q_m\in U\setminus \mathcal{D}$ such that $L_{q_m}\cap\overline{W}\neq \emptyset$ near $\mathbf{P}_1(p)$. Let $\mathbf{Q}_1(q_m)$ denote the first contact point of $L_{q_m}$ with $\partial W$ coming from $+\infty$. Since $\mathbf{P}_1(p)$ lies in the interior of $S$, for all $m$ sufficiently large we have $\mathbf{Q}_1(q_m)\in S\cap \mathbb{B}^3(\mathbf{P}_1(p),\epsilon)$. Moreover, the line $L_{q_m}$ is transversal to $S$ at $\mathbf{Q}_1(q_m)$, because otherwise $q_m$ would belong to $\mathcal{D}$.

Because $\mathbf{Q}_1(q_m)\in \operatorname{int}(S)$ and $L_{q_m}$ is transversal to $S$
at $\mathbf{Q}_1(q_m)$, the line $L_{q_m}$ crosses $\partial W$ at that point.
Since $\mathbf{Q}_1(q_m)$ is the first contact point coming from $+\infty$, it follows
that there exists $\delta_m>0$ such that
\[
q_m+tn\in W
\qquad\text{for all } t_1(q_m)-\delta_m<t<t_1(q_m).
\]
In fact, $\delta_m$ does not degenerate: in case (ii) the line $L_p$ is transversal to the disk $S\cap\mathbb{B}^3(\mathbf{P}_1(p),\epsilon)$, so after shrinking $U$ all the lines $L_{q_m}$ cross this disk transversally and enter $W$ there, and they cannot meet $\overline{W}$ again before leaving the ball $\mathbb{B}^3(\mathbf{P}_1(p),\epsilon)$. Hence there is $\delta>0$, independent of $m$, with $\delta_m\geq\delta$, and the exit points $\widetilde{\mathbf{Q}}_m$ defined below stay at a definite distance below $\mathbf{P}_1(p)$. This covers the case in which $L_p$ is transversal to $S$ at $\mathbf{P}_1(p)$. If instead $L_p$ is tangential at $\mathbf{P}_1(p)$, so that $\mathbf{P}_2(p)=\mathbf{P}_1(p)\in\operatorname{int}(S)$, no uniform lower bound on $\delta_m$ is needed: the exit points $\widetilde{\mathbf{Q}}_m\in\partial W\setminus S$ constructed below then converge to $\mathbf{P}_1(p)$, and since $\operatorname{int}(S)$ is relatively open in $\partial W$, no point of $\operatorname{int}(S)$ can be a limit of points of $\partial W\setminus S$; this already yields the contradiction. Now $q_m\notin\mathcal{D}$ means precisely that the line $L_{q_m}$ has no second contact point with $S$ below $\mathbf{Q}_1(q_m)$. Therefore the first time at which the line stops belonging to $W$ must occur at a point of $\partial W\setminus S$.
If we define
\[
t_m:=\inf\Bigl\{t<t_1(q_m): q_m+\tau n\in W \text{ for all } t<\tau<t_1(q_m)\Bigr\},
\]
then
\[
\{q_m+tn:t_m<t<t_1(q_m)\}\subset W
\qquad\text{and}\qquad
\widetilde{\mathbf{Q}}_m:=q_m+t_m n\in \partial W\setminus S.
\]

Passing to a subsequence if necessary, the points $\mathbf{Q}_1(q_m)$ converge to $\mathbf{P}_1(p)$ and the corresponding line segments converge to $\{p+tn:t_2(p)<t<t_1(p)\}$. Hence $\widetilde{\mathbf{Q}}_m\longrightarrow \mathbf{P}_2(p)$. Since every $\widetilde{\mathbf{Q}}_m$ belongs to $\partial W\setminus S$, we conclude that $\mathbf{P}_2(p)\in \partial S$, which contradicts the fact that $p$ is a local interior maximum. This proves the claim.

Now let $q\in U$ be such that $\mathbf{P}_1(q)\in S_{t_0^+}$. By the claim we may assume that $q\in\mathcal{D}$, and then $\Lambda_1(q)\leq t_0$. By \eqref{Alexandrov-function}, this is equivalent to
\begin{equation}\label{inequality1}
t_2(q)\leq t_0-(t_1(q)-t_0).
\end{equation}
Looking at \eqref{reflection}, we see that \eqref{inequality1} means precisely that the reflection of $\mathbf{P}_1(q)$ through the plane $\mathcal{P}_{t_0}$ lies above the point $\mathbf{P}_2(q)$. Hence $\mathbf{P}_1(q)^*\in\overline{W}$. Summarizing, a neighborhood of $S_{t_0^+}^*$ containing $\mathbf{P}_2(p)$ is contained in $\overline{W}$. Besides, writing $N^*$ for the unit normal of the reflected surface $S_{t_0^+}^*$, the inclusion $\overline{W}\subset\overline{\mathcal{C}(N)}$ places $S_{t_0^+}^*$ on the side of $\Sigma$ into which $N$ points at $\mathbf{P}_2(p)$; hence $N^*(\mathbf{P}_1(p)^*)=N(\mathbf{P}_2(p))$.

Therefore, using Definition \ref{def:tangent-point}, $S_{t_0^+}^*\geq_{\mathbf{P}_2(p)} S$. In particular, $\mathbf{P}_2(p)$ is an interior tangent point when $\mathbf{P}_1(p)\neq \mathbf{P}_2(p)$ and a boundary tangent point when $\mathbf{P}_1(p)=\mathbf{P}_2(p)$. In the latter case $\mathbf{P}_2(p)=\mathbf{P}_1(p)$ lies on $\mathcal{P}_{t_0}$, and the interior-conormal condition of Definition \ref{def:tangent-point} holds: the reflection through $\mathcal{P}_{t_0}$ fixes the tangent plane $T_{\mathbf{P}_2(p)}\Sigma$ and maps the interior conormal of $\partial S$ at $\mathbf{P}_2(p)$ to that of $\partial S_{t_0^+}^*$, so the boundary form of the geometric maximum principle (Theorem \ref{geometric-maximum}) is the one applied. Let $R$ denote the reflection through $\mathcal{P}_{t_0}$. Both $R(\Sigma)$ and $\Sigma$ are complete $f$-surfaces without boundary, for the same $f$, and near $\mathbf{P}_2(p)$ the surface $R(\Sigma)$ contains the piece $\Sigma_{t_0^+}^*$, which lies on the $\overline{W}$ side of $\Sigma$. By Theorem \ref{geometric-maximum}, the surfaces $R(\Sigma)$ and $\Sigma$ coincide in a neighborhood of $\mathbf{P}_2(p)$, and then the continuation argument of Theorem \ref{geometric-maximum} gives $R(\Sigma)=\Sigma$; thus $\mathcal{P}_{t_0}$ is a plane of symmetry of $\Sigma$.
\end{proof}

The Alexandrov reflection method is very useful whenever we obtain a local interior maximum for the Alexandrov function. Although the function $\Lambda_1$ is not continuous in general, the next lemma shows that $\Lambda_1$ is upper semicontinuous with respect to planes as well as points. Combined with Lemma \ref{compactness} below, this will allow us to produce maximum points of the Alexandrov function on compact sets.

\begin{lemma}\label{tilted-planes}
Let $S\subset\Sigma$ be closed and let $\varepsilon\to0$. Suppose that there exists a sequence of points $p^\varepsilon\to p$ and a sequence of planes $\mathcal{P}^\varepsilon\to\mathcal{P}$ such that
\[
p^\varepsilon\in\mathcal{P}^\varepsilon
\qquad\mbox{and}\qquad
p\in\mathcal{P}.
\]
Let $\Lambda_1^\varepsilon$ and $\Lambda_1$ be the corresponding Alexandrov functions associated with $S$ and the planes $\mathcal{P}^\varepsilon$ and $\mathcal{P}$, respectively. If $\Lambda_1^\varepsilon(p^\varepsilon)$ exists for every $\varepsilon$, then either
\[
\limsup_{\varepsilon\to0} \Lambda_1^\varepsilon(p^\varepsilon)=-\infty,
\]
or $\Lambda_1(p)$ exists and
\begin{equation}\label{semicontinuous}
\limsup_{\varepsilon\to0} \Lambda_1^\varepsilon(p^\varepsilon)\leq\Lambda_1(p).
\end{equation}
\end{lemma}

\begin{proof}
Assume that $\Lambda_1^\varepsilon(p^\varepsilon)$ exists for $\varepsilon\to0$. Then there is a sequence $\left(\mathbf{P}_1(p^\varepsilon),\mathbf{P}_2(p^\varepsilon)\right)$ of pairs of points of $S$. If either $\mathbf{P}_2(p^\varepsilon)$, or $\mathbf{P}_1(p^\varepsilon)$ and hence $\mathbf{P}_2(p^\varepsilon)$, drifts off to infinity, then 
\[
\limsup_{\varepsilon\to0} \Lambda_1^\varepsilon(p^\varepsilon) = -\infty
\]
and the proof is finished. Thus we can assume that ${\rm dist}_{\mathbb{R}^3} \left( \mathbf{P}_i(p^\varepsilon ), \mathcal{P}^\varepsilon \right)$, $ i=1,2$, is bounded for all sufficiently small $\varepsilon$.

Since $S$ is closed, a subsequence converges to a pair of points of $S$, possibly identical, $(Q_1,Q_2)$. Clearly, $Q_1,Q_2\in L_p$, since $L_{p^\varepsilon}\to L_p$. We call $t_1$ and $t_2$ the heights of these points above $\mathcal{P}$.

We first observe that $p\in\mathcal{D}$, so that $\Lambda_1(p)$ exists. Indeed, the points of $L_p$ strictly above $Q_1$ are limits of points of the lines $L_{p^\varepsilon}$ lying strictly above $\mathbf{P}_1(p^\varepsilon)$, which do not belong to $\overline{W}$; hence the open ray of $L_p$ above $Q_1$ does not meet $W$, and the line $L_p$ first meets $\overline{W}$ at some point at height $t_1(p)\geq t_1$, which belongs to $S$ or produces, below it, the configuration of the definition of $\mathcal{D}$ through the segment inherited from the segments $\{p^\varepsilon+tn^\varepsilon : t_2(p^\varepsilon)<t<t_1(p^\varepsilon)\}\subset W$.

The values $t_1(p^\varepsilon)$ and $t_2(p^\varepsilon)$ converge to the heights of $Q_1$ and $Q_2$ above $\mathcal{P}$, respectively. By the definition of first and second contact points, the points $\mathbf{P}_1(p)$ and $\mathbf{P}_2(p)$ must be at least as high as $Q_1$ and $Q_2$, respectively, that is, $t_1(p)\geq t_1$ and $t_2(p)\geq t_2$. Hence $\Lambda_1(p)\geq\frac{t_1+t_2}{2}$, and \eqref{semicontinuous} follows.
\end{proof}


\subsection{The Alexandrov function and tilted planes}\label{subsect:tilted}

In this subsection, we prove that, given an unbounded end $E$ of an $f$-surface written as a graph with asymptotic expansion as in \eqref{eq:log-expansion}, and an orthogonal plane $\mathcal{P}$ with unit normal $n$, the maximum value of the Alexandrov function $\Lambda_1$ of $E$ with respect to the plane $\mathcal{P}$ is achieved at $\partial E$.

Without loss of generality, up to a rotation and a translation, we may assume that
\[
\partial E\subset\Pi := \{p\in \mathbb{R}^3 \, : \, x_3 =0\}
\]
and
\[
\lim _{|y_i| \to +\infty} N(y_i) = \nu = (0,0, \pm 1),
\]
for any proper sequence $y_i \in E$. Besides, after a reflection through the plane $\Pi$, if necessary, we may also assume that the graphical function $u$, given in Theorem \ref{thm:log-expansion}, is non-negative. Henceforth we assume that $\nu = (0,0,1)$, keeping in mind that the case $\nu = (0,0,-1)$ is completely analogous (see Figure \ref{F4-art}). 

\begin{figure}[ht!]
\centering
\includegraphics{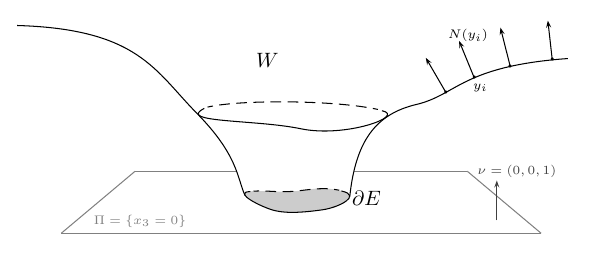}
\caption{$u \geq 0$ and $\lim _{p \in E, \, |p| \to \infty} N(p) =\nu=(0,0,1)$.}\label{F4-art}
\end{figure}

Let $W$ be the connected component of $\mathcal{C}(N)\cap\Pi^+$, where $\Pi^+:=\{p\in\mathbb{R}^3:x_3>0\}$ is the upper half-space, that contains on its boundary the bounded domain of $\Pi$ limited by $\partial E\cap\Pi$. Consider a vertical plane $\mathcal{P}$ with unit normal $n$ containing the vector $\nu=(0,0,1)$, and let
\[
h(p)=\langle p, (0,0,1)\rangle
\]
be the height function with respect to the plane $\Pi$. We define the Alexandrov function
\(
\Lambda:[0,\infty)\rightarrow \{-\infty\}\cup\mathbb{R}
\)
on $E$ (with respect to $\mathcal P$) by
\begin{equation}\label{associaded-Aleksandrov-function}
\Lambda(\rho)=\max\,\{\Lambda_1(p):p\in\mathcal{D},\,h(p)=\rho\},
\end{equation}
where $\mathcal{D}$ is the domain of the Alexandrov function $\Lambda_1$, given by \eqref{Alexandrov-function}, associated with $E$ with respect to $\mathcal P$ and the normal orientation given by $n$. Observe that the domain of the function $\Lambda_1$ is a subset of the plane $\mathcal{P}$, whereas the domain of $\Lambda$ is a subset of $\mathbb{R}$.

By Theorem \ref{thm:log-expansion}, for any $\rho\geq 0$, the level set
\(
E_\rho=\{q\in E:h(q)=\rho\}
\)
is a non-empty compact set. Then the function $\Lambda$ is finite-valued, since the set
\(
\{p\in\mathcal{D}:h(p)=\rho\}
\)
is non-empty and compact; indeed, the line through a point of $E_\rho$ extremal in the direction $n$ first meets $\overline{W}$ there, enters the bounded slice of $W$ at height $\rho$, and leaves it through another point of $E$, so its projection belongs to $\mathcal{D}$ and $\Lambda_1$ is finite at it. This situation changes drastically if we tilt the plane $\mathcal{P}$ slightly. We now study the consequences on the associated Alexandrov function $\Lambda_1$, and on the Alexandrov function $\Lambda$, caused by this tilting.

Let $n$ be the normal vector of the plane $\mathcal{P}$ and consider the vectors
\[
\nu^\varepsilon=\nu+\varepsilon n
\qquad\mbox{and}\qquad
n^\varepsilon=n-\varepsilon\nu
\]
for a small $\varepsilon>0$. Note that $\langle \nu^\varepsilon,n^\varepsilon\rangle=0$. We define the tilted planes
\[
\Pi_{\varepsilon}=\{p\in\mathbb{R}^3:\langle p,\nu^\varepsilon\rangle=0\}
\]
and $\mathcal{P}^\varepsilon$ to be the plane passing through the origin with normal vector $n^\varepsilon$. The height function from the plane $\Pi_{\varepsilon}$ is denoted by $h^\varepsilon$, namely
\[
h^\varepsilon(p)=\langle p,\nu^\varepsilon\rangle.
\]
Clearly,
\[
\mathcal{P}^\varepsilon\to\mathcal{P},
\qquad
\Pi_{\varepsilon}\to\Pi,
\qquad
h^\varepsilon\to h
\qquad\mbox{as } \varepsilon\to0.
\]

Since the end $E$ has at most logarithmic growth by Theorem \ref{thm:log-expansion}, an important phenomenon occurs when $\varepsilon>0$ is small: the Alexandrov function $\Lambda_1^\varepsilon$ associated with $E$ takes the value $-\infty$ outside a non-empty compact subset of $\mathcal{D}^\varepsilon$. More precisely:

\begin{lemma}\label{compactness}
Let $\varepsilon>0$ be sufficiently small. There exists a non-empty compact set $\Omega ^\varepsilon\subset\mathcal{P}^\varepsilon$ such that for any $p\in\mathcal{D}^\varepsilon \setminus \Omega ^\varepsilon$ there exists only a first contact point of the set
\[
\{p+tn^\varepsilon:t\in\mathbb{R}\}\cap\overline{W}
\]
as $t$ decreases from $+\infty$, i.e., $\Lambda_1^\varepsilon(p)=-\infty$. Moreover, there exists $p_0 \in \mathcal{D}^\varepsilon\cap\Omega^\varepsilon$ such that $\Lambda_1^\varepsilon(p_0)>-\infty$.
\end{lemma}

\begin{proof}
We prove this lemma in the case $n=(1,0,0)$. This is sufficient, since the asymptotic expansion in Theorem \ref{thm:log-expansion} is invariant under rotation of the $(x_1,x_2)$-coordinates. Set
\(
r=|(x_1,x_2)|,
\)
and write $N(x_1,x_2):=N(x_1,x_2,u(x_1,x_2))$ for the value of the unit normal at the corresponding point of the graph $E$. From the convergence $N(x_1,x_2)\to \nu$ as $r\to\infty$, we have
\[
\langle N(x_1,x_2),n^\varepsilon\rangle\to\langle \nu,n^\varepsilon\rangle=-\varepsilon<0,
\]
so we can choose $R_1>0$ large enough so that
\[
\langle N(x_1,x_2),n^\varepsilon\rangle\neq 0
\qquad\mbox{for all } r>R_1.
\]
Similarly, since $|Du(x_1,x_2)|\to 0$ as $r\to\infty$, there exists $R_2>0$, large enough, such that
\[
\frac{\partial u}{\partial x_1}>-\varepsilon
\qquad\mbox{for all } r>R_2.
\]
Let
\[
R=\max\{R_1,R_2\}
\qquad\mbox{and}\qquad
E_R=\{(x_1,x_2,x_3)\in E: x_1^2+x_2^2\leq R^2\}.
\]
Note that $E_R$ is a non-empty compact set, since $E_R$ is the graph of a continuous function over a compact set.

Consider the new coordinate system for $\mathbb{R}^3$ defined by
\[
(y_1,y_2,y_3)=(x_1-\varepsilon x_3,x_2,x_3+\varepsilon x_1).
\]
In these coordinates, the planes $\mathcal{P}^\varepsilon$ and $\Pi_{\varepsilon}$ are given by
\[
\mathcal{P}^\varepsilon=\{(y_1,y_2,y_3)\in\mathbb{R}^3:y_1=0\}
\qquad\mbox{and}\qquad
\Pi_{\varepsilon}=\{(y_1,y_2,y_3)\in\mathbb{R}^3:y_3=0\}.
\]
The projection of the set $E_R$ onto the plane $\mathcal{P}^\varepsilon$ is compact, so there exists $\rho>0$ such that this projection is contained in the set
\[
\Omega^\varepsilon=\{(0,y_2,y_3)\in\mathcal{P}^\varepsilon:|y_2|,|y_3|\leq \rho\}.
\]

The intersection of the end $E$ with the plane parallel to $\Pi_{\varepsilon}$ at signed height $\tau$ is given by the curve
\[
\Gamma_\tau=\left\{(y_1,y_2,\tau)\in\mathbb{R}^3:
u\left(\frac{y_1}{1+\varepsilon^2}+\frac{\varepsilon \tau}{1+\varepsilon^2},y_2\right)=
\frac{\tau}{1+\varepsilon^2}-\frac{\varepsilon y_1}{1+\varepsilon^2}\right\}.
\]
Let $\Phi$ be the function defined by
\[
\Phi(y_1,y_2)=u\left(\frac{y_1}{1+\varepsilon^2}+\frac{\varepsilon \tau}{1+\varepsilon^2},y_2\right)-
\frac{\tau}{1+\varepsilon^2}+\frac{\varepsilon y_1}{1+\varepsilon^2}.
\]
Hence,
\begin{equation}\label{derivate}
\frac{\partial \Phi}{\partial y_1}(y_1,y_2)=\frac{1}{1+\varepsilon^2}\frac{\partial u}{\partial x_1}\left(x_1,x_2\right)
+\frac{\varepsilon}{1+\varepsilon^2},
\end{equation}
where
\[
(x_1,x_2)=\left(\frac{y_1}{1+\varepsilon^2}+\frac{\varepsilon \tau}{1+\varepsilon^2},y_2\right).
\]

Let $p=(0,y_2,y_3)\in \mathcal{D}^\varepsilon \setminus \Omega ^\varepsilon$. Then the first contact point
\[
\mathbf{P}_1(p)=(y_1,y_2,y_3)=(x_1-\varepsilon x_3,x_2,x_3+\varepsilon x_1)
\]
of the line $L_p=\{p+tn^\varepsilon:t\in\r\}$ with $E$ must be transversal, since
\(
\langle N(x_1,x_2),n^\varepsilon\rangle\neq 0
\). We now show that there is no second contact point. There are two possibilities.

If $|y_3|=|\tau|>\rho$, then every point of $\Gamma_\tau$ lies outside $E_R$, because the projection of $E_R$ onto $\mathcal{P}^\varepsilon$ is contained in $\Omega^\varepsilon$. Therefore,
\[
\frac{\partial u}{\partial x_1}(x_1,x_2)>-\varepsilon
\]
at every point of $\Gamma_\tau$, and by \eqref{derivate} we obtain
\[
\frac{\partial \Phi}{\partial y_1}(y_1,y_2)>0
\]
along the whole curve $\Gamma_\tau$. Hence $\Gamma_\tau$ is the graph of a function of the variable $y_2$, and therefore each line
\(
\{(t,y_2,\tau)\in\mathbb{R}^3:t\in\mathbb{R}\}
\)
intersects $E$ exactly once. In particular, the line $L_p$ cannot have a second contact point.

If $|y_2|>\rho$, then every point of $E$ with second coordinate equal to $y_2$ lies outside $E_R$. Thus, along the whole line
\(
\{(t,y_2,\tau):t\in\mathbb{R}\}
\)
every intersection point with $E$ satisfies
\[
\frac{\partial u}{\partial x_1}(x_1,x_2)>-\varepsilon.
\]
Again by \eqref{derivate}, the function $\Phi(\,\cdot\,,y_2)$ is strictly increasing in $y_1$, and therefore that line intersects $E$ at most once. Hence $L_p$ has no second contact point.

In both cases we conclude that
\[
\Lambda_1^\varepsilon(p)=-\infty.
\]

We now prove that there exists $p_0 \in \mathcal{D}^\varepsilon\cap\Omega^\varepsilon$ such that $\Lambda_1^\varepsilon(p_0)>-\infty$. Consider the planes 
\[
\Pi_{\varepsilon}(s)=\{(y_1,y_2,y_3)\in\mathbb{R}^3:y_3= s\}
\qquad\mbox{for } s \in \r,
\]
and set
\[
s_0 := {\rm inf}\left\{ s \in \r : \Pi _\varepsilon (\bar s) \cap \partial E = \emptyset \mbox{ for all } \bar s > s\right\}.
\]
Clearly, $s_0 = {\rm max}_{\partial E} y_3> - \infty $, since $\partial E$ is compact. The plane $\Pi _\varepsilon (s_0)$ is the first plane ``coming from $+\infty$'' that touches $\partial E$.

Since $\partial E\subset \Pi=\{x_3=0\}$ and $E\setminus\partial E$ lies in the
side $\{x_3>0\}$, we can choose a compact collar neighborhood $A\subset E$ of
$\partial E$ whose boundary is $\partial A=\partial E\cup \Gamma$, where $\Gamma\subset E\setminus \partial E$ is a smooth embedded closed curve.
Moreover, shrinking the collar if necessary, we may assume that there exists
$\eta>0$ such that
\[
x_3\ge 3\eta \qquad \text{on } \Gamma .
\]

Since $A$ is compact and $h^\varepsilon\to h=x_3$ uniformly on $A$ as
$\varepsilon\to 0$, for $\varepsilon>0$ small enough we have
\[
h^\varepsilon \ge 2\eta \qquad \text{on } \Gamma .
\]
On the other hand, by definition, $s_0=\max_{\partial E} h^\varepsilon $. Hence, for $\varepsilon$ sufficiently small, we also have $s_0<\eta$. Fix now
any $s\in (s_0,\eta)$.

We claim that $\Pi_\varepsilon(s)\cap A$ contains a compact component. Indeed,
the function $h^\varepsilon|_A$ is continuous, satisfies $h^\varepsilon\le s_0 < s $ on $\partial E$, and $h^\varepsilon \ge 2\eta > s$ on $\Gamma $. Therefore the level set
\[
\Pi_\varepsilon(s)\cap A=\{q\in A:h^\varepsilon(q)=s\}
\]
is non-empty. Since it does not meet $\partial A$, it is a non-empty compact subset of the interior of $A$; in particular, $\Pi_\varepsilon(s)\cap E$ contains a compact component, say $C$.

Choose a point $q_0\in C$ where the $y_1$-coordinate attains its maximum or
minimum on $C$. Then the line
\[
L_{p_0}=\{(t,y_2(q_0),s):t\in\mathbb{R}\},
\qquad
p_0:=(0,y_2(q_0),s)\in \mathcal{P}^\varepsilon,
\]
is tangent to $C\subset E$ at $q_0$. Indeed, since $q_0$ is a $y_1$-extremum of $C=E\cap\{y_3=s\}$, we have $\partial\Phi/\partial y_1=0$ at $q_0$, which by \eqref{derivate} means $\partial u/\partial x_1=-\varepsilon$ there; as the unit normal of the graph is $N=(-Du,1)/\sqrt{1+|Du|^2}$, this gives $\langle N(q_0),n^\varepsilon\rangle=(-\partial u/\partial x_1-\varepsilon)/\sqrt{1+|Du|^2}=0$, so $n^\varepsilon$ lies in the tangent plane of $E$ at $q_0$. Thus $p_0\in \mathcal{D}^\varepsilon$, and by the tangential case in the definition of the Alexandrov function we obtain $\Lambda_1^\varepsilon(p_0)>-\infty$. Since $\Omega^\varepsilon$ was defined so as to contain all points where $\Lambda_1^\varepsilon\neq -\infty$, it follows that $p_0\in \mathcal{D}^\varepsilon\cap \Omega^\varepsilon $. This proves the existence of the required point $p_0$.
\end{proof} 

We now redefine the set $\Omega^\varepsilon$ as the closure of the set of points in $\mathcal{D}^\varepsilon$ where $\Lambda_1^\varepsilon$ is greater than $-\infty$, i.e.,
\[
\Omega^\varepsilon=\overline{\{p\in\mathcal{D}^\varepsilon:\Lambda_1^\varepsilon (p)\neq-\infty\}},
\]
which is still compact; see Figure \ref{boundaries}. Let us define $h_0^\varepsilon$ as the minimum of the height function $h^\varepsilon$ restricted to $\Omega^\varepsilon$, and the sets
\[
\mathcal{D}^\varepsilon_0=\{p\in\mathcal{D}^\varepsilon:h^\varepsilon(p)= h_0^\varepsilon\}
\qquad\mbox{and}\qquad
E^\varepsilon_0 = \{p\in E:h^\varepsilon(p)\geq h^\varepsilon_0\}.
\]

\begin{figure}[ht]
\centering
\includegraphics{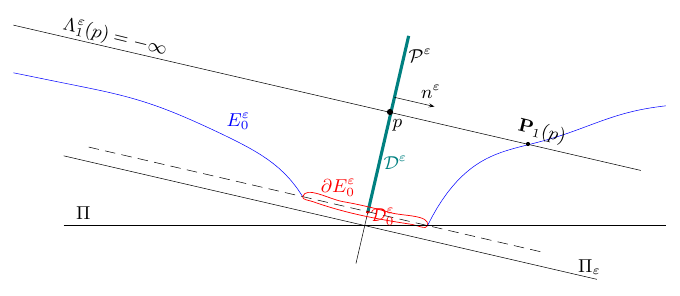}
\caption{The Alexandrov function $\Lambda_1^\varepsilon$ associated with $E$ is valued as $-\infty$ outside of a compact set.}
\label{boundaries}
\end{figure}

\begin{lemma}\label{decreasing}
For the end $E$, under all the assumptions made in this subsection, assuming moreover that $\Sigma$ has finitely many ends and that the limit unit normal of every end of $\Sigma$ is vertical, and for any vertical plane $\mathcal{P}$ containing $\nu=(0,0,1)$, either the function $\Lambda$ defined in \eqref{associaded-Aleksandrov-function} is strictly decreasing, or else $\Sigma$ has a plane of reflection parallel to $\mathcal{P}$.
\end{lemma}

\begin{proof}
We begin by proving that the function $\Lambda$ is non-increasing. For this it suffices to show that 
\[
\Lambda(\rho)\leq\Lambda(0)\qquad\mbox{for all }\rho>0,
\]
since we can translate vertically $E$ and redefine the end so as to choose the level $\rho=0$ arbitrarily.

Recall that, as in the setup of this subsection, 
\[
W\ \text{is the component of\ }\mathcal{C}(N)\cap\{p\in\mathbb{R}^3:h(p)>0\}
\]containing on its boundary the bounded domain of $\Pi$ limited by $\partial E$, where $N$ is the unit normal vector field defined on $E$ converging to $\nu$ at infinity. We claim:

\setcounter{claim}{0}
\begin{claim}\label{claim1}
$\Lambda(\rho)\leq\Lambda(0)$ for all $\rho>0$ if, and only if, 
\[
E_{t^+}^*\cap\{p\in\mathbb{R}^3:h(p)>0\}\subset \overline{W}
\qquad\mbox{for all } t>\Lambda(0).
\]
\end{claim}

\begin{proof}[Proof of Claim A]
Here $E_{t^+}^*$ is the reflection of $E_{t^+}$ through a plane parallel to $\mathcal{P}$ at distance $t$, following the notation of Section \ref{alexandrov-reflection}. 
If $\Lambda(\rho)\leq\Lambda(0)$ for all $\rho>0$, then for an arbitrary $t>\Lambda(0)$ we have
\[
\Lambda(\rho)\leq t
\qquad\mbox{for all }\rho>0.
\]
By the definition of the Alexandrov function, we infer that
\[
E_{t^+}^*\cap\{p\in\mathbb{R}^3:h(p)>0\}\subset \overline{W}.
\]

Conversely, suppose that
\[
E_{t^+}^*\cap\{p\in\mathbb{R}^3:h(p)>0\}\subset \overline{W}
\qquad\mbox{for all } t>\Lambda(0).
\]
We argue by contradiction. Assume that there exist $\rho_0>0$ and $p\in\mathcal{D}$ such that
\[
h(p)=\rho_0
\qquad\mbox{and}\qquad
\Lambda_1(p)=\Lambda(\rho_0)>\Lambda(0).
\]
Set $t=\Lambda(\rho_0)$. If a neighborhood of the reflected set $E_{t^+}^*$ containing the reflection of $\mathbf{P}_1(p)$ through the plane $\mathcal{P}_t$ were contained in $\overline{W}$, then the reflection of $\mathbf{P}_1(p)$ through $\mathcal{P}_t$, namely $\mathbf{P}_2(p)$, would be a tangent point between $E_{t^+}^*$ and $E$ with $E_{t^+}^*\geq_{\mathbf{P}_2(p)}E$, exactly as in the proof of Lemma \ref{Alexandrov-principle}, and the geometric maximum principle would imply that $\mathcal{P}_t$ is a plane of symmetry of $E$. But then every horizontal section of $E$ would be symmetric with respect to $\mathcal{P}_t$, and therefore
\[
\Lambda(\rho)=t
\qquad\mbox{for all }\rho\geq 0.
\]
In particular, $\Lambda(0)=t$, contradicting $t>\Lambda(0)$. Hence no such neighborhood can be contained in $\overline{W}$, and Claim \ref{claim1} follows.
\end{proof}

To prove that
\[
E_{t^+}^*\cap\{p\in\mathbb{R}^3:h(p)>0\}\subset \overline{W}
\qquad\mbox{for all } t>\Lambda(0),
\]
we use tilted planes together with Lemma \ref{tilted-planes}. 

The semicontinuity of the Alexandrov function associated with $E$, Lemma \ref{tilted-planes}, and Lemma \ref{compactness} imply that the maximum value of the function $\Lambda_1^\varepsilon$ must be attained at some point of the compact set $\mathcal{D}^\varepsilon\cap \Omega^\varepsilon$. Indeed, by Lemma \ref{compactness} the supremum of $\Lambda_1^\varepsilon$ is not $-\infty$ and any maximizing sequence stays in $\Omega^\varepsilon$; a subsequence converges to some $q\in\Omega^\varepsilon$, and Lemma \ref{tilted-planes}, applied with the constant plane $\mathcal{P}^\varepsilon$, shows that $q\in\mathcal{D}^\varepsilon$ and that $\Lambda_1^\varepsilon(q)$ bounds the supremum from above, so the supremum is attained at $q$. We now prove that the height of such a point, measured from the plane $\Pi_{\varepsilon}$, is exactly $h_0^\varepsilon$.

\begin{claim}\label{claim2}
The function $\Lambda_1^\varepsilon$ attains its maximum at some point of $\mathcal{D}^\varepsilon_0\cap \Omega^\varepsilon$.
\end{claim}

\begin{proof}[Proof of Claim B]
By the previous discussion, the function $\Lambda_1^\varepsilon$ attains its
maximum value $t$ at some point $q\in\mathcal{D}^\varepsilon\cap \Omega^\varepsilon$. We claim that necessarily
\[
h^\varepsilon(q)=h_0^\varepsilon.
\]

Assume by contradiction that $h^\varepsilon(q)>h_0^\varepsilon$. Since both contact points $\mathbf{P}_1(q)$ and $\mathbf{P}_2(q)$ lie on the line
$L_q$, and $h^\varepsilon$ is constant along $L_q$, we have
\[
h^\varepsilon(\mathbf{P}_1(q))=h^\varepsilon(\mathbf{P}_2(q))=h^\varepsilon(q)>h_0^\varepsilon.
\]
Hence $\mathbf{P}_1(q),\mathbf{P}_2(q)\in \operatorname{int}(E_0^\varepsilon)$, so both points lie away from $\partial E_0^\varepsilon$. Consider then the open set $W^\varepsilon\subset W$ of points of $W$ lying strictly above the plane $\Pi_\varepsilon(h_0^\varepsilon)$, so that $S^\varepsilon:=\partial W^\varepsilon\cap E\supset \operatorname{int}(E_0^\varepsilon)$, and the Alexandrov function of $S^\varepsilon$ agrees with $\Lambda_1^\varepsilon$ near $q$. Since $\mathbf{P}_1(q)$ and $\mathbf{P}_2(q)$ avoid $\partial S^\varepsilon$, and the contact points of nearby lines stay close to those of $L_q$, by the continuity argument in the proof of Lemma \ref{Alexandrov-principle}, the point $q$ is a local interior maximum of this Alexandrov function (see
Figure \ref{claimB}). By Lemma \ref{Alexandrov-principle}, applied to the surface $\Sigma$ with the subsets $W^\varepsilon$ and $S^\varepsilon$, the surface $\Sigma$ would then have a plane of symmetry parallel to $\mathcal{P}^\varepsilon$. This is impossible: by hypothesis every end of $\Sigma$ has vertical limit Gauss map, while a reflection in a plane parallel to $\mathcal{P}^\varepsilon$, which is non-vertical for $\varepsilon>0$, would send these vertical limit normals to non-vertical ones. This contradiction proves that $ h^\varepsilon(q)=h_0^\varepsilon$, that is, $q\in \mathcal{D}^\varepsilon_0\cap \Omega^\varepsilon$. Hence Claim \ref{claim2} follows.
\end{proof}

\begin{figure}[ht!]
\centering
\includegraphics{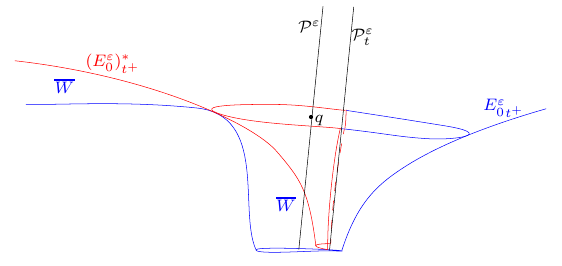}
\caption{$\Lambda_1^\varepsilon$ attains its maximum value $t$ at $q\in\mathcal{D}^\varepsilon\cap \Omega^\varepsilon$ such that $h^\varepsilon(q)>h_0^\varepsilon$.}
\label{claimB}
\end{figure}

Writing $z^\varepsilon$ for the maximum value of $\Lambda_1^\varepsilon$, it follows that
\begin{equation}\label{contained-1}
(E^\varepsilon_0)_{t^+}^*\subset \overline{W}
\qquad\mbox{for all }t\geq z^\varepsilon.
\end{equation}
Letting $\varepsilon\to 0$ in \eqref{contained-1} yields
\begin{equation}\label{contained-2}
E_{t^+}^*\subset \overline{W}
\qquad\mbox{for all }t\geq \limsup_{\varepsilon\to0}z^\varepsilon.
\end{equation}
Indeed, fix $t>\limsup_{\varepsilon\to0}z^\varepsilon$ and let $q\in E_{t^+}^*$ be the reflection through $\mathcal{P}_{t}$ of a point $r\in E$ with $h(r)>t$. For $\varepsilon$ small enough we have $t\geq z^\varepsilon$ and, since $h^\varepsilon\to h$ uniformly on compact subsets of $E$ and $h_0^\varepsilon\to 0$, the point $r$ belongs to $E_0^\varepsilon$; its reflection $q_\varepsilon$ through $\mathcal{P}^\varepsilon_{t}$ then lies in $(E_0^\varepsilon)_{t^+}^*\subset\overline{W}$ by \eqref{contained-1}, and $q_\varepsilon\to q$ as $\varepsilon\to0$. As $\overline{W}$ is closed, $q\in\overline{W}$.
We now check that $\limsup_{\varepsilon\to0}z^\varepsilon\leq \Lambda(0)$. By Claim \ref{claim2}, the value $z^\varepsilon$ is attained at a point $q_\varepsilon\in\mathcal{D}^\varepsilon_0\cap\Omega^\varepsilon$, of height $h^\varepsilon(q_\varepsilon)=h_0^\varepsilon$. The heights $h_0^\varepsilon$ tend to $0$ as $\varepsilon\to 0$: on the one hand, the construction of the point $p_0$ in the proof of Lemma \ref{compactness} produces points of $\Omega^\varepsilon$ at heights arbitrarily close to $\max_{\partial E}h^\varepsilon\to 0$, so $\limsup h_0^\varepsilon\leq 0$; on the other hand, since $h^\varepsilon$ is constant along each line $L_p$ (because $\langle\nu^\varepsilon,n^\varepsilon\rangle=0$), every $p\in\mathcal{D}^\varepsilon$ satisfies $h^\varepsilon(p)=h^\varepsilon(\mathbf{P}_1(p))\geq\inf_{\overline{W}}h^\varepsilon\to 0$, so $\liminf h_0^\varepsilon\geq 0$. If $\limsup z^\varepsilon=-\infty$ there is nothing to prove. Otherwise, along a subsequence realizing the $\limsup$, the points $q_\varepsilon$ converge to some $q\in\mathcal{P}$ with $h(q)=0$, and Lemma \ref{tilted-planes} shows that $q\in\mathcal{D}$ and
\[
\limsup_{\varepsilon\to0}z^\varepsilon=\limsup_{\varepsilon\to0}\Lambda_1^\varepsilon(q_\varepsilon)\leq \Lambda_1(q)\leq \Lambda(0),
\]
where the last inequality uses the definition \eqref{associaded-Aleksandrov-function} of $\Lambda$. Given $t\geq\Lambda(0)$, we have
\[
t\geq \limsup_{\varepsilon\to0}z^\varepsilon,
\]
hence \eqref{contained-2} yields $E_{t^+}^*\cap\{p\in\mathbb{R}^3:h(p)>0\}\subset \overline{W}$. Using Claim \ref{claim1}, we conclude that the function $\Lambda$ is non-increasing.

If $\Lambda$ is not strictly decreasing, then there exist $0\leq \rho_1<\rho_2$ such that
\[
\Lambda(\rho_1)=\Lambda(\rho_2).
\]
Since $\Lambda$ is non-increasing, it follows that $\Lambda$ is constant on $[\rho_1,\rho_2]$. In particular, the Alexandrov function associated with $E$, namely $\Lambda_1$, has a local interior maximum, and Lemma \ref{Alexandrov-principle} yields a plane of symmetry parallel to $\mathcal{P}$. This proves the lemma.
\end{proof}


\subsection{A Schoen type theorem for $f$-surfaces}

In this subsection we establish a Schoen-type theorem for $f$-surfaces.

\begin{lemma}\label{lem:parallel-ends}
Let $\Sigma\subset\mathbb{R}^3$ be a complete connected embedded $f$-surface satisfying \eqref{elliptic-conditions}, of finite total curvature, and with two embedded ends. Then the two ends are parallel and $\Sigma$ diverges to infinity in opposite directions.

More precisely, after a rigid motion there exist a plane
\(
\Pi=\{x_3=0\},
\)
a compact set $K\subset \Sigma$, and smooth functions $u_+,u_-:\mathbb{R}^2\setminus D(R)\to \mathbb{R}$ such that $\Sigma\setminus K=E_+\cup E_-$, where
\[
E_+=\{(x,u_+(x)):x\in\mathbb{R}^2\setminus D(R)\},
\qquad
E_-=\{(x,u_-(x)):x\in\mathbb{R}^2\setminus D(R)\},
\]
with
\[
u_+(x)>0,
\qquad
u_-(x)<0,
\]
and
\[
u_+(x)\to +\infty,
\qquad
u_-(x)\to -\infty
\qquad\mbox{as } |x|\to +\infty.
\]
Moreover, the unit normal of $\Sigma$ tends to opposite vertical limits along the two ends.
\end{lemma}

\begin{proof}
By Lemma \ref{LemFinite}, each end is a graph over the complement of a compact disk in some plane, and the Gauss map extends continuously to the punctures. Let $\nu_1$ and $\nu_2$ be the limiting unit normals at the two punctures.

If $\nu_1$ and $\nu_2$ were not parallel, then the two graphical ends would therefore intersect outside a compact set, contradicting embeddedness. Indeed, let $\Pi_1$ and $\Pi_2$ be the two limit planes; being non-parallel, they meet along an affine line $\ell$. Choose a unit vector $w$ transverse to both $\Pi_1$ and $\Pi_2$, let $\Pi_0$ be a plane orthogonal to $w$, and let $\pi:\mathbb{R}^3\to\Pi_0$ be the projection along $w$. By Theorem \ref{thm:log-expansion}, for $|x|\geq R_0$ each end $E_i$ is a graph over $\Pi_i\setminus D(R_0)$ with height $O(\log|x|)$ and gradient $O(|x|^{-1})$. Since $w$ is transverse to $\Pi_i$ and the gradient tends to $0$, for $R_0$ large the tangent planes of $E_i$ stay uniformly transverse to $w$. Moreover, $\pi$ restricts to an affine isomorphism $\Pi_i\to\Pi_0$, and the graph map $x\mapsto(x,u_i(x))$ is a $C^1$ perturbation of it whose perturbation tends to $0$ at infinity; hence, for $R_0$ large, $\pi|_{E_i}$ restricts to a proper local diffeomorphism from $E_i\cap\{|x|\geq R_0\}$ onto the exterior of a compact set of $\Pi_0$. Being a proper local diffeomorphism between connected oriented surfaces, it is a covering map with constant, orientation-preserving sheet number (the Jacobian of $\pi|_{E_i}$ has constant sign); the proper homotopy $x\mapsto\pi(x,t\,u_i(x))$, $t\in[0,1]$, to the affine isomorphism $\pi|_{\Pi_i}$ shows this number is $1$. Hence $\pi|_{E_i}$ is a proper diffeomorphism onto the exterior of a compact set of $\Pi_0$.

Over each $z\in\Pi_0$ with $|z|$ large there is then a unique point of $E_i$, whose signed $w$-coordinate we denote by $g_i(z)$. Writing $a_i(z)$ for the $w$-coordinate of the point of the \emph{plane} $\Pi_i$ over $z$, and noting that $\pi$ restricts to a linear isomorphism $\Pi_i\to\Pi_0$, so that $|x|$ and $|z|$ are comparable, the $O(\log|x|)$ deviation of the graph from $\Pi_i$ yields $g_i(z)=a_i(z)+O(\log|z|)$, where each $a_i$ is an affine function of $z$. As $\Pi_1$ and $\Pi_2$ are non-parallel, $\delta:=a_1-a_2$ is a non-constant affine function and $\delta$ changes sign. Therefore $h(z):=g_1(z)-g_2(z)=\delta(z)+O(\log|z|)$ tends to $+\infty$ in one direction and to $-\infty$ in the opposite one. Joining two such points by an arc contained in the connected set $\{|z|=\rho\}$ for $\rho$ large and applying the intermediate value theorem, we find $z^\ast$ with $h(z^\ast)=0$; the corresponding points of $E_1$ and $E_2$ share the same projection $z^\ast$ and the same $w$-coordinate, and hence coincide. This produces a point of $E_1\cap E_2$, contradicting the embeddedness of $\Sigma$. Hence the ends are parallel. After a rigid motion we may assume that the common limiting plane is $\Pi=\{x_3=0\}$.

Again by Theorem \ref{thm:log-expansion}, the height function of each end admits a limit in $\mathbb{R}\cup\{\pm\infty\}$. We claim that the two limits are infinite and of opposite sign. Suppose, on the contrary, that one of the limits is finite, or that both are infinite of the same sign. In each of these cases, both ends are contained, outside a compact set, in a half-space determined by a horizontal plane: this is clear if both limits are finite or both are equal to $+\infty$ (resp. $-\infty$), and if one limit is finite and the other is $+\infty$ (resp. $-\infty$) the surface again lies above (resp. below) some horizontal plane outside a compact set. Since the remaining part of $\Sigma$ is compact, the whole surface would then be contained in a half-space. The arguments of Corollary \ref{thm:half-space} would imply that $\Sigma$ is a plane, which is impossible since $\Sigma$ has two ends. This proves the claim.

Denote by $E_+$ the end whose height tends to $+\infty$ and by $E_-$ the one whose height tends to $-\infty$, and write $u_\pm$ for the corresponding graphing functions. Enlarging the compact set $K$ if necessary, we get $u_+>0$ and $u_-<0$, together with $u_\pm(x)\to\pm\infty$ as $|x|\to+\infty$.

Finally, we check that the unit normal $N$ of $\Sigma$ has opposite vertical limits along the two ends. Since $\Sigma$ is connected, properly embedded, and two-sided (it carries the globally defined unit normal $N$), it separates $\mathbb{R}^3$ into exactly two connected components, each having $\Sigma$ as its boundary, as in Section \ref{alexandrov-reflection}; let $\mathcal{C}(N)$ be the one into which $N$ points. Fix $x$ with $|x|$ large and let $L$ be the upward-oriented vertical line over $x$. Since $\Sigma\setminus K=E_+\cup E_-$ with $K$ compact, for $x$ outside the projection of $K$ the line $L$ meets $\Sigma$ in exactly two points, once on $E_-$ and, higher up, once on $E_+$; both intersections are transversal, because $|Du_\pm|=O(|x|^{-1})$ forces the tangent planes of the ends to be nearly horizontal. These two points split $L$ into the ray $\alpha$ below $E_-$, the segment $\beta$ between the two ends, and the ray $\gamma$ above $E_+$. Each transversal crossing passes from one component of $\mathbb{R}^3\setminus\Sigma$ to the other; hence, after the two crossings, $L$ returns to the component it started in, so exactly one of the two crossings enters $\mathcal{C}(N)$ while the other leaves it. As $N$ points into $\mathcal{C}(N)$ and $L$ is oriented upward, $\langle N,e_3\rangle>0$ at the entering crossing and $\langle N,e_3\rangle<0$ at the leaving one. Letting $|x|\to+\infty$, the nearly horizontal normals give $\langle N,e_3\rangle\to\pm1$, with opposite signs on $E_+$ and $E_-$; therefore $N$ tends to opposite vertical limits.
\end{proof}

\begin{theorem}\label{main-theorem}
Let $\Sigma\subset\mathbb{R}^3$ be a complete connected embedded $f$-surface satisfying \eqref{elliptic-conditions}, of finite total curvature, and with two embedded ends. Then $\Sigma$ must be rotationally symmetric. In particular, there exists $\tau>0$ such that $\Sigma$ is the surface of revolution $M_\tau$ determined by Sa Earp and Toubiana in \cite{SaEarp1}.
\end{theorem}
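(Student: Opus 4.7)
The plan is to exhibit a vertical plane of symmetry of $\Sigma$ in every horizontal direction by iterating the tilted-plane Alexandrov technique of Lemma \ref{decreasing}, and then to deduce rotational symmetry; the uniqueness of rotational examples (Theorem \ref{uniqueness}) will finish the proof.

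First I would invoke Theorem \ref{embeddedness} to reduce to the case in which $\Sigma$ is globally embedded and admits a horizontal plane of symmetry, which after a vertical translation I take to be $H=\set{x_3=0}$. By Lemma \ref{parallel-ends} the two ends are asymptotically vertical and lie on opposite sides of $H$, say $E_1\subset\set{x_3\geq 0}$ with Gauss map limit $\nu=(0,0,1)$ and $E_2$ the mirror image of $E_1$. Since $\Sigma$ cannot be contained in a half-space (by the half-space theorem, Corollary \ref{thm:half-space}), the waist $\Sigma\cap H$ is a non-empty compact set.

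Next, fix an arbitrary horizontal unit vector $n$ and let $\mathcal{P}$ be the vertical plane through the origin with normal $n$. Applying Lemma \ref{decreasing} to $E_1$ with respect to $\mathcal{P}$ yields a dichotomy: either $\Sigma$ already has a plane of symmetry parallel to $\mathcal{P}$, in which case we are done for this direction, or the Alexandrov function $\Lambda$ is strictly decreasing on $[0,+\infty)$. In the latter case I use the horizontal symmetry to define the Alexandrov function for the whole $\Sigma$ on the entire plane $\mathcal{P}$: since reflection through $H$ is an isometry of $\Sigma$ that exchanges the horizontal lines $L_p$ and $L_{p'}$ at heights $\rho$ and $-\rho$, the associated function on $\mathcal{P}$ satisfies $\Lambda_1(p)=\Lambda_1(p')$ whenever $p'$ is the reflection of $p$ through $H$. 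Consequently the extension of $\Lambda$ to $(-\infty,0]$ is strictly increasing, so $\Lambda_1$ attains its global maximum at a point $p_0\in\mathcal{P}\cap H$; the dominance inequality $\Lambda_1(q)\leq \Lambda(h(q))\leq \Lambda(0)=\Lambda_1(p_0)$, valid on a neighborhood of $p_0$, shows that $p_0$ is a local interior maximum of $\Lambda_1$ (there is no boundary issue, since $\Sigma$ has no boundary), and Lemma \ref{Alexandrov-principle} delivers a plane of symmetry of $\Sigma$ parallel to $\mathcal{P}$. Either way, every horizontal direction $n$ yields a vertical plane of symmetry of $\Sigma$ with normal $n$.

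To conclude rotational symmetry, note that any such vertical reflection preserves the compact set $\Sigma\cap H$ and therefore fixes its centroid $c\in H$; thus every vertical plane of symmetry contains the vertical line $\ell$ through $c$. Composing two such reflections through planes with normals $n,n'$ produces a rotation of $\r^3$ about $\ell$ whose angle is twice the angle between $n$ and $n'$; letting $n'$ vary over all horizontal directions, every rotation about $\ell$ is realized as a symmetry of $\Sigma$, so $\Sigma$ is rotationally symmetric about $\ell$. The uniqueness statement of Theorem \ref{uniqueness} then identifies $\Sigma$ with the rotational ESWMT-surface $M_\tau$ for some $\tau>0$.

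The main obstacle is the strictly-decreasing alternative of Lemma \ref{decreasing}, where the Alexandrov reflection on the upper end $E_1$ alone fails to produce an interior maximum and hence no plane of symmetry. Turning this case into an actual symmetry of $\Sigma$ requires combining the tilted-plane compactness of the preceding subsection with the horizontal plane of symmetry furnished by Theorem \ref{embeddedness}; the latter is ultimately a consequence of the logarithmic growth at infinity (Theorem \ref{newregularity-infinity}) together with the half-space theorem, and it is precisely this global symmetry that upgrades $\Lambda_1$ from a monotone function on a half-line to a function on all of $\mathcal{P}$ with a genuine local interior maximum at the waist height.
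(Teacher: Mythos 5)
Your proposal is correct and follows essentially the same route as the paper: Theorem \ref{embeddedness}, Lemma \ref{parallel-ends}, the dichotomy of Lemma \ref{decreasing}, an interior maximum of the Alexandrov function of the whole surface fed into Lemma \ref{Alexandrov-principle}, the centroid argument forcing all vertical symmetry planes through a common axis, and finally Theorem \ref{uniqueness}. The only (harmless) deviation is in the strictly decreasing case, where you extract the interior maximum from the horizontal mirror symmetry supplied by Theorem \ref{embeddedness}, whereas the paper applies Lemma \ref{decreasing} to both ends separately and observes that the global Alexandrov function is increasing near $-\infty$ and decreasing near $+\infty$.
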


\begin{proof}
Let $E_1$ and $E_2$ be the ends of $\Sigma$. By Lemma \ref{lem:parallel-ends} we know that $E_1$ and $E_2$ are parallel ends and that $\Sigma$ diverges to infinity in opposite directions. 

Let $\nu$ be the continuous extension of the Gauss map of one of the ends; observe that the Gauss map tends to $-\nu$ at the other end. Let $\mathcal{P}$ be a plane containing $\nu$. Up to a rigid motion we may assume that $\nu=(0,0,1)$ and that $\mathcal P$ is a vertical plane. Consider the Alexandrov functions on each end $E_1$ and $E_2$ (with respect to $\mathcal P$), denoted by $\Lambda^1$ and $\Lambda^2$ respectively, and defined in \eqref{associaded-Aleksandrov-function}. Then, by Lemma \ref{decreasing}, either both of these Alexandrov functions are strictly decreasing, or at least one of the ends has a plane of symmetry parallel to $\mathcal{P}$.

In the second case, if $\mathcal{Q}$ is a plane of symmetry of one of the ends parallel to $\mathcal{P}$, then the reflection of $\Sigma$ through $\mathcal{Q}$ is an $f$-surface that coincides with $\Sigma$ along that end; since they agree on an open set they have, in particular, a tangent point with the ordering required by Theorem \ref{geometric-maximum}, so by the geometric maximum principle, Theorem \ref{geometric-maximum}, the two surfaces coincide everywhere, and $\Sigma$ has a plane of symmetry parallel to $\mathcal{P}$.

Assume now that both Alexandrov functions are strictly decreasing. In analogy with \eqref{associaded-Aleksandrov-function}, define
\[
\Lambda_\Sigma(\rho)=\max\{\Lambda_1(p):p\in\mathcal{D},\ h(p)=\rho\},
\qquad \rho\in\mathbb{R},
\]
where $h$ is the height function with respect to $\Pi$. Since the two ends diverge to infinity in opposite directions, every horizontal slice $\Sigma\cap\{h=\rho\}$ is compact and non-empty, so $\mathcal{D}\cap\{h=\rho\}$ is compact; moreover, the horizontal line through a point of the slice $\Sigma\cap\{h=\rho\}$ extremal in the direction $n$ first meets $\overline{W}$ there, enters the bounded slice of $W$, and must leave it through another point of $\Sigma$, so the projection of that point belongs to $\mathcal{D}$ and $\Lambda_1$ is finite at it. Arguing as in Section \ref{subsect:tilted}, the maximum above is attained and $\Lambda_\Sigma(\rho)\in\mathbb{R}$ for every $\rho$.

There exist $a<b$ such that the part of $\Sigma$ above $\{h=b\}$ lies in $E_+$ and the part below $\{h=a\}$ lies in $E_-$. For $\rho\geq b$, a horizontal line at height $\rho$ meets $\overline{W}$ in the configuration described in Section \ref{subsect:tilted} for the end $E_+$, so $\Lambda_\Sigma$ agrees on $[b,+\infty)$ with the Alexandrov function of $E_+$, which is strictly decreasing. Likewise, applying Section \ref{subsect:tilted} to $E_-$ after the reflection $x_3\mapsto -x_3$, the function $\Lambda_\Sigma$ is strictly increasing on $(-\infty,a]$. In particular,
\[
M:=\sup_{\rho\in\mathbb{R}}\Lambda_\Sigma(\rho)=\sup_{\rho\in[a,b]}\Lambda_\Sigma(\rho)<+\infty .
\]
Let $p_j\in\mathcal{D}$, $a\leq h(p_j)\leq b$, be a maximizing sequence, $\Lambda_1(p_j)\to M$. The contact points $\mathbf{P}_1(p_j)$ lie on the compact set $\Sigma\cap\{a\leq h\leq b\}$, so the points $p_j$ stay in a compact subset of $\mathcal{P}$ and a subsequence converges to some $p\in\mathcal{P}$. Since $M>-\infty$, Lemma \ref{tilted-planes}, applied with the constant sequence of planes $\mathcal{P}^\varepsilon=\mathcal{P}$, gives $p\in\mathcal{D}$ and $\Lambda_1(p)\geq M$, whence $\Lambda_1(p)=M$ and the supremum is attained. As $S=\Sigma$ has empty boundary, $p$ is a local interior maximum of $\Lambda_1$, and Lemma \ref{Alexandrov-principle} yields a plane of symmetry parallel to $\mathcal P$ (see Figure \ref{Alexandrov-graph}).

\begin{figure}[h]
\centering
\includegraphics{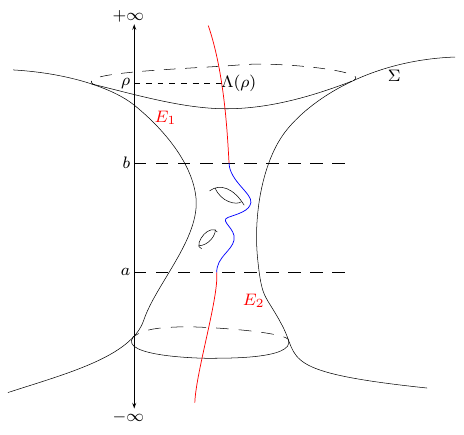}
\caption{The Alexandrov function, $\Lambda$, on $\Sigma$.}
\label{Alexandrov-graph}
\end{figure}

Therefore, for any vertical plane $\mathcal{P}$ containing $\nu$, the surface $\Sigma$ is symmetric with respect to some plane parallel to $\mathcal{P}$. Choose a regular value $t$ of the height function $x_{3|\Sigma}$. Since $\Sigma$
has two ends and diverges to infinity in opposite directions, the horizontal section
\[
\Sigma_t:=\Sigma\cap\{x_3=t\}
\]
is a compact $1$-dimensional submanifold of $\mathbb{R}^3$, hence a finite union
of smooth embedded closed curves. Its barycenter is therefore well defined.
Every vertical plane of symmetry of $\Sigma$ leaves $\Sigma_t$ invariant, and
consequently it fixes the barycenter of $\Sigma_t$. Hence all vertical planes of
symmetry pass through the same point of the horizontal plane $\{x_3=t\}$.
Since each of them is parallel to $\nu$, they intersect along a common line parallel
to $\nu$. Therefore $\Sigma$ is rotationally symmetric with respect to this line. By the converse statement in Theorem \ref{existence}, every such surface is one of the examples $M_\tau$; hence
\[
\Sigma=M_\tau,
\]
where $\tau>0$ is the distance from $\Sigma$ to its axis of revolution.
\end{proof}

\begin{remark}
By Theorem \ref{existence}, the surface has a horizontal plane of symmetry. In particular, the logarithmic coefficients of the two ends in the expansion \eqref{eq:log-expansion} coincide.
\end{remark}


\section{Classification for small total curvature}\label{SectApJM}

We conclude with the classification of complete connected embedded $f$-surfaces of finite total curvature and small absolute total curvature.

\begin{theorem}\label{thm:small-total-curvature}
Let $\Sigma \subset \r^3$ be a complete connected embedded $f$-surface satisfying \eqref{elliptic-conditions} and having finite total curvature. Then:
\begin{itemize}
\item if $\int _\Sigma |K| \, dA < 4 \pi$, $\Sigma$ is a plane;
\item if $\int _\Sigma |K| \, dA < 8 \pi$, $\Sigma$ is either a plane or a special catenoid.
\end{itemize}
\end{theorem}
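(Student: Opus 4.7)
The plan is to combine the Jorge--Meeks type formula (Theorem \ref{ThmJM}) with the one-ended classification (Theorem \ref{one-end}) and the two-ended Schoen type theorem (Theorem \ref{main-theorem}) to enumerate all possible topologies below the curvature thresholds.

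First I would observe that for any ESWMT-surface the Gaussian curvature is non-positive by property (i), so $\left|\int_\Sigma K\,dA\right| = -\int_\Sigma K\,dA$. Since $\Sigma$ is embedded outside a compact set, its ends are embedded, hence Theorem \ref{ThmJM} applies, giving
\begin{equation*}
\left|\int_\Sigma K\,dA\right| = 4\pi(g + k - 1),
\end{equation*}
where $g$ is the genus and $k\geq 1$ is the number of ends (note $k\geq 1$ because by the maximum principle there are no closed ESWMT-surfaces). In particular the total curvature is an integer multiple of $4\pi$.

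For the first bullet, if $\left|\int_\Sigma K\,dA\right| < 4\pi$ then $g+k-1 < 1$, forcing $g+k = 1$, i.e. $g=0$ and $k=1$. By Theorem \ref{one-end}, every complete ESWMT-surface of finite total curvature with one embedded end is a plane, which settles this case.

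For the second bullet, $\left|\int_\Sigma K\,dA\right| < 8\pi$ forces $g+k \in \{1,2\}$. The case $g+k=1$ was just handled. If $g+k=2$ we have two options: either $(g,k)=(1,1)$ or $(g,k)=(0,2)$. The first option is ruled out because Theorem \ref{one-end} says any one-ended such surface is a plane, which has genus $0$. In the remaining case $(g,k)=(0,2)$ the hypotheses of the Schoen type Theorem \ref{main-theorem} are satisfied (complete, connected, two ends, embedded outside a compact set, finite total curvature, elliptic conditions), so $\Sigma$ is rotationally symmetric, and by Theorem \ref{uniqueness} it must coincide with one of the special catenoids $M_\tau$ of Sa Earp--Toubiana.

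I do not anticipate a serious obstacle: the argument is a clean enumeration of topological types allowed by the Jorge--Meeks constraint, together with invocations of the previously proved structural results. The only subtlety worth writing out carefully is confirming that the hypothesis ``embedded outside a compact set'' is exactly what is needed to apply both Theorem \ref{one-end} (which needs embedded ends) and Theorem \ref{main-theorem}, and that the Jorge--Meeks formula \eqref{JMFormula} indeed applies under this weaker embeddedness hypothesis since its proof only uses the conformal structure and behavior at the ends.
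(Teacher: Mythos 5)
Your proposal is correct and follows essentially the same route as the paper: apply the Jorge--Meeks type formula (Theorem \ref{ThmJM}) to reduce to the cases $(g,k)=(0,1)$, $(1,1)$, $(0,2)$, then invoke Theorem \ref{one-end} for the one-ended cases and Theorem \ref{main-theorem} for the two-ended case. The only difference is cosmetic: you spell out the sign observation $\left|\int_\Sigma K\,dA\right|=-\int_\Sigma K\,dA$ and the applicability of the embeddedness hypothesis, which the paper leaves implicit.
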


\begin{proof}
By Theorem \ref{ThmJM},
\[
\int_\Sigma K\, dA = 4\pi(1-g-k),
\]
where $g$ is the genus of $\Sigma$ and $k$ is the number of ends. Since $K\leq 0$, by property (i) in Section \ref{SectPre}, we have $\int_\Sigma |K|\,dA=-\int_\Sigma K\,dA=4\pi(g+k-1)$. Since $\Sigma$ is complete, connected, embedded, and there are no closed $f$-surfaces in $\r^3$, we have $k\geq 1$.

Assume first that $\int_\Sigma |K|\, dA<4\pi $. Then $ g+k-1<1$, hence $g+k<2$. Since $g\geq 0$ and $k\geq 1$ are integers, it follows that $g=0$ and $ k=1$. Therefore $\Sigma$ has one embedded end, and Corollary \ref{cor:one-end-plane} implies that $\Sigma$ is a plane.

Assume now that $\int_\Sigma |K|\, dA<8\pi $. Then $ g+k-1<2$, hence $g+k<3$. Since $g\geq 0$ and $k\geq 1$ are integers, the only possibilities are
\[
(g,k)=(0,1),\qquad (g,k)=(1,1),\qquad (g,k)=(0,2).
\]

If $(g,k)=(0,1)$, then $\Sigma$ is a plane by Corollary \ref{cor:one-end-plane}.

If $(g,k)=(1,1)$, then $\Sigma$ again has one embedded end, so Corollary \ref{cor:one-end-plane} yields that $\Sigma$ is a plane, which is impossible since the plane has genus zero. Hence this case cannot occur.

If $(g,k)=(0,2)$, then Theorem \ref{main-theorem} implies that $\Sigma$ is rotationally symmetric. Therefore $\Sigma$ is one of the special catenoids from Theorem \ref{existence}.

This proves the theorem.
\end{proof}

\begin{remark}\label{rem:sharpness}
Theorem \ref{thm:small-total-curvature} is sharp in the following sense. The special catenoids are complete connected embedded $f$-surfaces of genus zero with two embedded ends and finite total curvature, so by Theorem \ref{ThmJM} their total curvature is exactly $-4\pi$. Thus, the value $4\pi$ is attained and the first statement cannot be improved. As for the second statement, if $\int_\Sigma |K|\,dA=8\pi$ then $g+k=3$, and the arguments above exclude $(g,k)=(2,1)$ and $(g,k)=(1,2)$: the first by Corollary \ref{cor:one-end-plane} and the second by Theorem \ref{main-theorem}, since the special catenoids have genus zero. Hence the only configuration left open is $(g,k)=(0,3)$, that is, a genus-zero $f$-surface with three embedded ends and total curvature $-8\pi$. In the minimal case no such surface exists, by the L\'opez--Ros theorem \cite{Lopez-Ros}, which states that planes and catenoids are the only complete embedded minimal surfaces in $\mathbb{R}^3$ of genus zero and finite total curvature. A L\'opez--Ros type theorem for $f$-surfaces is not available (its proof relies on the L\'opez--Ros deformation, which has no known analogue in the Weingarten setting), and the existence of an $f$-surface with $(g,k)=(0,3)$ remains an interesting open problem.
\end{remark}

\section*{Acknowledgments}

The authors would like to thank J.A. Gálvez and P. Mira for their numerous thoughtful suggestions and improvements to the paper. The authors are also grateful to the anonymous referees of earlier versions of this work for their careful reading and valuable suggestions.

J.M. Espinar is partially supported by Spanish MIC Grant PID2024-160586NB-I00 and the {\it Maria de Maeztu} Excellence Unit IMAG, reference CEX2020-001105-M, funded by 
MCINN/AEI/10.13039/501100011033/CEX2020-001105-M.

H\'eber Mesa is supported by Universidad del Valle.

\nocite{Chern-Osserman}
\bibliography{bibliography-paper.bib}

\end{document}